\documentclass[11pt,a4paper]{amsart}
\usepackage[all]{xy}
\usepackage{amscd}
\usepackage{amssymb}

\newtheorem{theorem}{Theorem}[section]
\newtheorem{lemma}[theorem]{Lemma}

\newtheorem{prop}[theorem]{Proposition}

\newtheorem*{ThA}{Theorem A}
\newtheorem*{ThB}{Theorem B}
\theoremstyle{definition}
\newtheorem{defin}[theorem]{Definition}

\theoremstyle{remark}
\newtheorem*{erem}{Remark}

\setlength\parskip{1ex}


 \def\={\setminus}  
 \def\lenght{\mathop\mathrm{lenght}\nolimits}
 
\def\md#1{#1\mbox{-}\mathrm{mod}}
\def\pr#1{#1\mbox{-}\mathrm{pro}}

\def\bp{\bullet}    \def\iso{\simeq}

\def\iff{if and only if }

\def\setsuch#1#2{\left\{\,#1\,|\,#2\,\right\}}

\def\rad{\mathop\mathrm{rad}}

\def\End{\mathop\mathrm{End}}

\def\im{\mathop\mathrm{Im}}
\def\8{\infty}      \def\+{\oplus}
\def\*{\otimes}


\def\al{\alpha} \def\be{\beta}  
       \def\la{\lambda}
     
 \def\Si{\Sigma}

\def\bA{\mathbf A}  \def\bB{\mathbf B}
  
\def\fR{\mathbf r}  
	\def\bL{\mathbf L}
\def\bW{\mathbf W}	\def\bD{\mathbf D}

\def\mN{\mathbb N}  
\def\mA{\mathbb A}

 \def\kD{\mathcal D}
\def\kK{\mathcal K} 
 \def\kQ{\mathcal Q}
\def\kP{\mathcal P}     
 
\def\kJ{\mathcal J} \def\kI{\mathcal I}

    \def\dP{\mathfrak P}

      \def\sJ{\mathsf J}

\def\sR{\mathsf R}

\def\tA{\widetilde{A}}     
     
\def\Mk{\Bbbk}
\def\bA{\mathbf A}	\def\bS{\mathbf S}
 \def\bH{\mathbf H}	\def\wQ{\widehat{\Mk\mathcal Q}}
 \def\Mat{\mathop\mathrm{Mat}\nolimits}
 \def\xx{\times}	\def\bJ{\mathbf J}

 \def\kxy{\Mk\left<x,y\right>}


\begin{document}
\author{Viktor  Bekkert,  Yuriy  Drozd  \and Vyacheslav Futorny}
\title{Derived tame local and two-point algebras}
\address{Departamento de Matem\'atica, ICEx, Universidade Federal de Minas
Gerais, Av.  Ant\^onio Carlos, 6627, CP 702, CEP 30123-970, Belo
Horizonte-MG, Brasil} \email{bekkert@mat.ufmg.br}
\address{Institute of Mathematics, National Academy of Sciences of Ukraine, Te\-reschenkivska 3, 01601 Kiev, Ukraine}
\email{drozd@imath.kiev.ua}
\address{Instituto de Matem\'atica e Estat\'\i stica,
Universidade de S\~ao Paulo, Caixa Postal 66281, S\~ao Paulo, CEP
05315-970, Brasil} \email{futorny@ime.usp.br}
\subjclass[2000]{Primary: 16G60, 16G70 secondary: 15A21, 16E05, 18E30}

\begin{abstract}
{We determine derived representation type of  complete finitely generated local
and two-point algebras over an algebraically closed field.}
\end{abstract}
\maketitle

\section*{Introduction}

We consider finitely generated unital (associative) algebras  over an algebraically closed  field $\Mk$.

One of the  main problems in the representation theory of algebras is a classification of indecomposable finitely generated modules.
The dichotomy theorem \cite{d0} divides all finite dimensional algebras according to their {\em representation type} into {\em tame} and {\em wild}.
In the case of tame algebras a classification of indecomposable modules is relatively easy, for each dimension $d$ they admit a parametrization of $d$-dimensional indecomposable modules by a finite number of $1$-parameter families. The situation is much more complicated in the case of wild algebras. This
 singles out the problem of establishing the representation type of a given algebra. The answer is fully known for
complete local algebras (those algebras whose quiver contains a single vertex)  \cite{bod,br,d,ge2,gp,hr,r0,r} and for
 finite dimensional two-point algebras  \cite{bg,bhan,dg,g,ge1,han,hm}.
In the case of infinite dimensional two-point algebras the problem is still open,  except the pure noetherian algebras \cite{d1}.

During the last years there has been an active study of derived categories. In particular, a notion of {\em derived representation type} was introduced for finite dimensional algebras \cite{gk}. The tame-wild dichotomy for  derived categories over finite dimensional algebras was established in \cite{bd}. The structure of the derived category is known for a few classes of finite dimensional algebras (e.g. \cite{bm,bud,h,hapr}).

On the other hand, certain infinite dimensional algebras and their derived categories play an important role in applications, in particular in the study of singularities of projective curves (cf. \cite{bud1}).
First results  on derived representation type in the infinite dimensional case were obtained in \cite{bud}.

In the present paper we determine  representation type of the bounded
derived category
of finitely generated modules
over  finitely generated complete local
and two-point algebras.

Our main results are the following classification theorems

\begin{ThA}
Let $\bA$ be a complete local algebra over
algebraically closed field $\Mk$. Then $\bA $ is derived tame if
and only if $\bA$ is isomorphic to one of the following algebras:
\begin{itemize}
\item $\bL_1= \Mk.$
\item The \emph{algebra of dual numbers} $\bL_2=\Mk[x]/(x^{2}).$
\item The power series algebra $\bL_3=\Mk [[x]].$
\item $\bL_4=\Mk [[x,y]]/(xy)$ - the local ring of a \emph{simple node} of an algebraic curve over a field $\Mk$.
\item The \emph{dihedral algebra} $\bL_5=\Mk \left<\left< x,y\right>\right> /(x^{2}, y^{2})$.
\end{itemize}

Moreover, the first algebra is derived finite and the second  and
third are derived discrete.
\end{ThA}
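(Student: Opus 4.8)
The plan is to run both implications through one device, a reduction of the bounded derived category $D^b(\md\bA)$ to a matrix problem. Since $\bA$ is local, $\bA$ is the unique indecomposable projective, so a bounded complex of projectives is simply a sequence of matrices $d_i$ with entries in $\fM=\rad\bA$ satisfying $d_{i-1}d_i=0$, taken up to the natural group of invertible base changes over $\bA$ together with homotopies; when $\gdim\bA<\infty$ this already captures all of $D^b(\md\bA)$, while when $\gdim\bA=\infty$ (the cases $\bL_2,\bL_4,\bL_5$, the algebras $\Mk[x]/(x^n)$, and others) one must also treat the non-perfect objects through the singularity category $D_{sg}(\bA)=D^b(\md\bA)/\mathrm{perf}(\bA)$, which in the relevant cases is again governed by a matrix problem of its own. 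I would package everything as a matrix problem over $\bA$ and then appeal to the tame--wild dichotomy for such problems (in the spirit of \cite{bd}), so that ``$\bA$ derived tame'' becomes ``the associated matrix problem is tame''.

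For sufficiency, $\bL_1=\Mk$ is immediate: a bounded complex of vector spaces is the direct sum of its shifted cohomologies, so $D^b(\md\Mk)$ has only finitely many indecomposables up to shift and $\bL_1$ is derived finite. For $\bL_3=\Mk[[x]]$ one uses $\gdim\Mk[[x]]=1$, so every object is a direct sum of shifts of its cohomology modules; the indecomposable finitely generated modules are $\Mk[[x]]$ and $\Mk[[x]]/(x^n)$, $n\ge1$, a countable parameter-free list, whence $\bL_3$ is derived discrete. For $\bL_2=\Mk[x]/(x^2)$ the attached matrix problem is very small --- the entries of the $d_i$ run over the one-dimensional ideal $(x)$ --- and a direct reduction, or the known classification of derived-discrete algebras, shows the indecomposable complexes are, up to shift, the finite ``string'' complexes $\cdots\to P\xrightarrow{\,x\,}P\to\cdots$ of free modules together with the stalk $\Mk$, so $\bL_2$ is derived discrete. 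The substantive cases are $\bL_4=\Mk[[x,y]]/(xy)$ and $\bL_5=\Mk\langle\langle x,y\rangle\rangle/(x^2,y^2)$: here I would show that the associated matrix problem is of ``clannish''/semichain type --- this is precisely where the special-biserial nature of the node and of the dihedral algebra enters --- so that its indecomposables are parametrised by string and band data and give, in every homological dimension, only finitely many one-parameter families; hence $\bL_4$ and $\bL_5$ are derived tame.

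For necessity set $d=\dim_\Mk(\fM/\fM^2)$, the minimal number of generators. If $d\ge3$, then $\bA$ has $\Mk\langle\langle x,y,z\rangle\rangle/\fM^2\cong\Mk[x,y,z]/(x,y,z)^2$ as a quotient, whose module category contains the wild three-arrow Kronecker problem; hence $\bA$ is representation-wild and a fortiori derived wild. If $d=1$, then $\bA\cong\Mk[x]/(x^n)$ with $n\in\{1,2,\dots\}\cup\{\infty\}$, and for $3\le n<\infty$ the matrix problem attached to perfect complexes --- differentials being square-zero matrices over the $(n-1)$-dimensional ideal $(x)$, up to base change and homotopy --- contains the wild problem of a pair of matrices under simultaneous similarity, so only $n=1,2,\infty$ survive. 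The bulk of the argument is $d=2$, where $\bA=\Mk\langle\langle x,y\rangle\rangle/I$ with $I\subseteq\fM^2$: invoking the known classification of representation-tame complete local algebras \cite{bod,br,d,ge2,gp,hr,r0,r} one is reduced to a finite list, and for every algebra on it other than $\bL_4$ and $\bL_5$ one produces, inside the homotopy category of complexes of free modules, a wild subproblem --- a pair of free matrices, or a wild quiver or poset --- the obstruction being exactly the failure of the defining relations to fit the special-biserial pattern.

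I expect the main obstacle to be the $d=2$ dichotomy itself: on the tame side it demands setting up the clannish matrix problem for $\bL_4$ and $\bL_5$ and proving it tame, i.e.\ a full string-and-band classification of the indecomposable complexes, and on the wild side a clean --- ideally uniform --- argument that every remaining two-generated complete local algebra is derived wild; in both directions the delicate point is to get the reduction of $D^b(\md\bA)$ to a matrix problem exactly right, including the non-perfect part when $\gdim\bA=\infty$, so that the tame--wild dichotomy for matrix problems can be brought to bear.
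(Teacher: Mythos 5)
Your list and the broad architecture (reduce to matrix problems over $\bA$, prove sufficiency by string--band classifications, prove necessity by exhibiting wild subproblems) are right, and the sufficiency half is essentially sound --- for $\bL_4,\bL_5$ you would be re-deriving the content of \cite{bud}, which the paper simply cites. But the necessity half has two genuine gaps, sitting exactly where you flag ``the main obstacle''. First, for $d=1$ and $3\le n<\infty$ you assert that the matrix problem for perfect complexes over $\Mk[x]/(x^n)$ ``contains a pair of matrices under simultaneous similarity''. This needs an actual construction, and it is not the routine one: $\Mk[x]/(x^n)$ is representation \emph{finite}, two-term complexes do not reveal wildness, and the paper has to build a six-term complex modelled on a wild box ($\bW_6$, with a dimension vector of negative quadratic form) to embed a wild problem. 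Second, and more seriously, your $d=2$ plan is guided by the heuristic that the obstruction is ``the failure of the defining relations to fit the special-biserial pattern''. That dividing line is wrong: $\Mk[x,y]/(x^2,y^2,xy,yx)$ and $\Mk[x,y]/(xy,x^n,y^m)$ are special biserial and representation tame, yet they must be excluded because they are derived wild. The correct dividing line is whether $\bA$ has a nonzero socle: the paper's key observation (Lemma~\ref{wildpaths}) is that \emph{any} local algebra with at least two radical generators and a minimal ideal is derived wild, via the three-term complex $d_1\tA\xrightarrow{aM(p)+bM(q)}d_2\tA\xrightarrow{zM(s)}d_3\tA$ built from a socle element $z$ and the wild quiver $\bW_1$. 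Without this (or an equivalent) you cannot uniformly kill the finite-dimensional representation-tame two-generated algebras, and a case-by-case attack steered by the special-biserial criterion would miss precisely them.

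It is also worth noting how the paper avoids your case-by-case analysis in the infinite-dimensional $d=2$ situation: if $\bA$ has no minimal ideal it is pure noetherian, so derived tame implies tame implies nodal by \cite{d1}, and the local nodal algebras are classified (Proposition~\ref{nodal}) as exactly $\bL_3,\bL_4,\bL_5$. This replaces your appeal to the classification of representation-tame local algebras by the sharper structural notion of nodal algebra, which is also what turns the sufficiency direction for $\bL_4,\bL_5$ into a citation of \cite{bud} rather than a fresh clannish-matrix computation.
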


\begin{ThB}
Let $\bA$ be the completion of a two-point algebra $\Mk\kQ/\kI$  over
algebraically closed field $\Mk$. Then

\medskip
\noindent(1) The following conditions are equivalent:

\medskip
(i) $\bA $ is derived tame.

\medskip
(ii) $\bA $ is either a gentle algebra  or a nodal non-gentle algebra (see Section~\ref{s2} for
definitions) or one
of the algebras $\bD_1$, $\bD_2$  (see Section~\ref{s24}).

\medskip
(iii) $\bA $ is isomorphic to one of the algebras from Table~\ref{table_gentle}  or to the algebra $(9)$ from Table~\ref{table_nodal} or to one
of the algebras $\bD_1$, $\bD_2$ %
\footnote{Note that all algebras from Table 1, except (9), are also in Table 2}.

\bigskip
\noindent(2) $\bA $ is derived discrete if and only if $\bA $ is
isomorphic to one of the algebras $(1), (3)-(5), (10)-(13), (16)-(17)$ from Table~\ref{table_gentle}.

\medskip
\noindent(3) $\bA $ is derived finite if and only if $\bA $ is
isomorphic to the algebra $(1)$ from Table~\ref{table_gentle}.

\end{ThB}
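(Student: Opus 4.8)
The proof has three ingredients — a classification of complete two-point algebras up to isomorphism reducing the problem to a bounded list of normal forms, a proof that each algebra in item~(iii) is derived tame, and a proof that every other two-point algebra is derived wild — after which items~(2) and~(3) are read off from the list. For the classification: since $\bA$ is complete it is determined by its Gabriel quiver $\kQ$ (two vertices carrying loops and arrows in both directions) together with an admissible ideal $\kI$, which one normalises using the action of $\Aut\wQ$. One first disposes of the cases in which derived wildness is forced — e.g.\ two vertices joined by three or more arrows, or the local algebra obtained by restricting $\bA$ to a vertex not being one of the five algebras of Theorem~A, or a radical-square-zero quotient of $\bA$ being wild — since then the reduction calculus of \cite{bd} exhibits a wild matrix problem inside the one governing $\bD^b(\md{\bA})$, so $\bA$ is derived wild. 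What survives is a finite family of normal forms, and organising it yields exactly the three classes of item~(ii): gentle algebras (Table~\ref{table_gentle}), nodal non-gentle algebras — whose only two-point representative is algebra~$(9)$ of Table~\ref{table_nodal} — and the two sporadic algebras $\bD_1,\bD_2$ of Section~\ref{s24}.

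\textbf{The tame algebras.} For a gentle $\bA$ I would use the description of $\bD^b(\md{\bA})$ in terms of string and band complexes, extending the finite-dimensional analysis of \cite{bm} to the complete setting: band complexes provide, for each fixed cohomology dimension vector, only finitely many one-parameter families, and all remaining indecomposables are rigid string complexes, which is precisely derived tameness. For a nodal non-gentle $\bA$ I would invoke \cite{bud}: such $\bA$ is an order over a nodal ring and $\bD^b(\md{\bA})$ is controlled by the (tame) matrix problem attached to the associated nodal curve. The sporadic algebras $\bD_1$ and $\bD_2$ are handled either by a direct reduction of their derived category to a tame differential biquiver, or by exhibiting a tilting complex realising a derived equivalence with one of the gentle or nodal algebras already treated.

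\textbf{The wild algebras — the main obstacle.} For each complete two-point algebra not isomorphic to one in the list I must establish derived wildness, and since this does \emph{not} follow formally from wildness (or even from non-domesticity) of $\md{\bA}$, the wild behaviour has to be exhibited inside $\bD^b(\md{\bA})$ itself. The uniform device is a reduction of the differential biquiver (bocs) describing minimal projective presentations of complexes over $\bA$ to a wild matrix problem — ultimately to a pair of free matrices, equivalently to a free module over $\kxy$; in many cases this reduction can be seeded from a derived-wild subquotient produced in the pruning step. The hard part is that the borderline normal forms — those whose module category is tame but which are neither gentle nor nodal nor $\bD_1$ nor $\bD_2$ — must be treated one at a time, each by its own short reduction; carrying out this finite but delicate case analysis is the principal difficulty of the theorem.

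\textbf{Derived discrete and derived finite.} Given the explicit list, part~(2) follows from the combinatorial criterion for derived discreteness of gentle algebras in the spirit of Vossieck: $\bA$ is derived discrete \iff its quiver admits no homotopy band, i.e.\ no cyclic reduced walk along which direct and inverse letters alternate with the relations in the balanced pattern that produces band complexes. A direct check shows this happens exactly for the algebras $(1), (3)-(5), (10)-(13), (16)-(17)$ of Table~\ref{table_gentle}, while the nodal algebra~$(9)$ and the algebras $\bD_1,\bD_2$ always carry such a band and are hence derived tame but not derived discrete. For part~(3), a derived-finite algebra is in particular derived discrete, hence among the algebras just listed; of those, every one except~$(1)$ still has infinitely many indecomposable complexes up to shift (for instance arbitrarily long string complexes), whereas algebra~$(1)$, the path algebra of the $A_2$ quiver, is derived equivalent only to itself and has but finitely many. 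This gives the strict inclusions among the derived-finite, derived-discrete and derived-tame two-point algebras exactly as stated.
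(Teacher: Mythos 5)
Your strategy coincides with the paper's: restrict to quivers whose separated/radical-square-zero data are tame, use Theorem~A and Lemma~\ref{subalg} to control the local algebras $e_i\bA e_i$, prove tameness of gentle algebras by extending \cite{bm}, of nodal ones by \cite{bud}, and of $\bD_1,\bD_2$ by a tilting complex over the nodal algebra $(9)$ (this is exactly Lemma~\ref{def}), and kill everything else by reductions to wild boxes. But as written your text is a road map, not a proof: the entire content of the implication $(i)\Rightarrow(iii)$ lives in the case-by-case elimination that you explicitly defer (``carrying out this finite but delicate case analysis is the principal difficulty''). The paper spends Cases 1--10 of Section~\ref{s33} on precisely this, constructing for each borderline ideal $\kI$ an explicit complex of $\bA$-$\kxy$-bimodules modelled on the wild quivers $\bW_1$--$\bW_4$ or the wild boxes $\bW_5$, $\bW_6$ (e.g.\ the four subcases of $\kQ3$ with $e_i\bA e_i\cong\bL_2$, or the subcases of $\kQ8$). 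Without these constructions, or at least a uniform substitute for them, the hard direction of part (1) is not established.

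One of your framing claims is also backwards and would cost you unnecessary work: you assert that derived wildness ``does not follow formally from wildness of $\md{\bA}$.'' It does (a wild family of modules yields a wild family of two-term minimal projective presentations, which are minimal complexes), and the paper exploits this repeatedly --- e.g.\ in Cases 6, 8(ca) and 9 it quotes the Hoshino--Miyachi list \cite{hm} to get module wildness of a finite-dimensional quotient and immediately concludes derived wildness. The genuinely delicate cases are the opposite ones, where $\bA$ is module-tame (or its tameness is unknown) yet derived wild; only there are the box constructions with $\bW_5,\bW_6$ indispensable. Finally, for parts (2) and (3) the paper simply cites \cite{bud} and \cite{bm}; your homotopy-band criterion is a reasonable gloss on those references, but you should note that the list in (2) consists exactly of the derived discrete \emph{gentle} algebras, the nodal non-gentle algebra $(9)$ and $\bD_1,\bD_2$ being excluded because they are derived equivalent to algebras admitting bands.
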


\begin{erem}
\begin{itemize}
\item Our definition (Definition~\ref{defgentlealg}) of a gentle algebra is slightly different from the standard one: we do not require the finite dimensionality of
such algebra but instead require its completeness.

\item Note that the classes of nodal and gentle algebras are not disjoint. In the case of two-point algebras there exists exactly one (up to isomorphism) nodal algebra which is not gentle. Obviously, there are many gentle algebras which are not nodal.
\end{itemize}
\end{erem}

The structure of the paper is as follows. In Section~\ref{s1}
preliminary results about derived categories and derived representation type are given.

In Section~\ref{s2}  we recall the definitions of nodal and gentle algebras, and classify
all such algebras in  local and two-point cases.

In Section~\ref{s3} we prove Theorems A and B.

\smallskip\emph{Acknowledgements.} This investigation was accomplished during the visit
 of the second author to the University o S\~ao Paulo supported by Fapesp (processo 2007/05047-4).
 The second author was also partially supported by INTAS Grant 06-1000017-9093.
The third author was partially supported by Fapesp (processo 2005/60337-2) and CNPq (processo 301743/2007-0).

\section{Derived representation type}\label{s1}


 Let $\bA$ be a \emph{semi-perfect} \cite{b} associative finitely generated  $\Mk$-algebra.
We denote by  $\md{\bA}$  the category of left finitely
generated $\bA$-modules and by $\kD(\bA)$  the derived category $\kD^{b}(\md{\bA})$ of
bounded complexes over $\md{\bA}$. As usually, it can be identified with the
homotopy category $\kK^{-,b}(\pr{\bA})$ of (right bounded) complexes of (finitely generated) projective
$\bA$-modules with bounded cohomologies. Since $\bA$ is semi-perfect, each complex from $\kK^{-,b}(\pr{\bA})$ is homotopic to a \emph{minimal}
one, i.e. to  a complex $C_{\bullet}=(C_n, d_n)$ such that $\im d_n\subseteq \rad C_{n-1}$ for all $n$. If $C_{\bullet}$
and $C^{\prime}_{\bullet}$ are two minimal complexes, they are isomorphic in $\kD(\bA)$ if and only if they are isomorphic as complexes.
Moreover, any morphism $f: C_{\bullet}\to C^{\prime}_{\bullet}$ in $\kD(\bA)$ can be presented by a morphism of complexes, and
$f$ is an isomorphism if and only if the latter one is. We denote by $\kP_{\min}(\bA)$ the category
 of minimal right bounded complexes of  (finitely generated) projective $\bA$-modules with bounded cohomologies.

Let $A_1, A_2, \ldots, A_t$ be all pairwise non-isomorphic indecomposable projective $\bA$-modules (all of them are direct summands of $\bA$).
If $P$ is a finitely generated projective $\bA$-module, it uniquely decomposes as $$P=\bigoplus_{i=1}^{t}p_iA_i.$$ Denote by $\fR(P)$ the
vector $(p_1,p_2,\ldots,p_t)$. The sequence $$(\dots,\fR(P_n),\fR(P_{n-1}),\dots)$$ (it
 has only finitely many nonzero entries)
is called the \emph{vector rank} $\fR_\bp(P_\bp)$
 of a bounded complex $P_\bp$ of projective $\bA$-modules.

The following definition is analogous to the definitions of \emph{derived tame} and \emph{derived wild} type for finite dimensional algebras \cite{bd}.
  \begin{defin}\label{tw}
  \begin{enumerate}
 \item
  We call a \emph{rational family} of bounded minimal complexes over $\bA$ a bounded complex $(P_\bp,d_\bp)$
 of finitely generated projective $\bA\*\sR$-modules, where $\sR$ is a \emph{rational algebra},
 i.e. $\sR=\Mk[t,f(t)^{-1}]$ for a nonzero polynomial $f(t)$, and $\im d_n\subseteq\sJ P_{n-1}$, where $\sJ=\rad\bA$.
 For a  rational family $(P_\bp,d_\bp)$ we define the complex $P_\bp(m,\la)=(P_\bp\*_\sR\sR/(t-\la)^m,d_\bp\*1)$ of projective $\bA$-modules, where $m\in\mN,\,\la\in\Mk,\,f(\la)\ne0$. Set $\fR_\bp(P_\bp)=
 \fR_\bp(P_\bp(1,\la))$ ($\fR_\bp$ does not depend on $\la$).
 \item
  We call an algebra $\bA$ \emph{derived tame} if there is a set $\dP$ of rational families of bounded complexes over
 $\bA$ such that:
  \begin{enumerate}
 \item  For each vector rank $\fR_\bp$ the set $\dP(\fR_\bp)=\setsuch{P_\bp\in\dP}{\fR_\bp(P_\bp)=\fR_\bp}$ is finite.
 \item
  For each vector rank $\fR_\bp$ all indecomposable complexes $(P_\bp,d_\bp)$ of projective $\bA$-modules
 of this vector rank, except finitely many isomorphism classes, are isomorphic to $P_\bp(m,\la)$ for some $P_\bp\in\dP$
 and some $m,\la$.
\end{enumerate}
 The set $\dP$ is called a \emph{parameterizing set} of $\bA$-complexes.
 \item
  We call an algebra $\bA$ \emph{derived wild} if there is a bounded complex $(P_\bp,d_\bp)$ of projective modules over
$\bA\*\Si$, where $\Si$ is the free $\Mk$-algebra in 2 variables,
such that $\im d_n\subseteq\sJ P_{n-1}$ and, for any finite dimensional $\Si$-modules $L,L'$,
  \begin{enumerate}
\item   $P_\bp\*_\Si L\iso P_\bp\*_\Si L'$ \iff $L\iso L'$.
 \item
  $P_\bp\*_\Si L$ is indecomposable \iff so is $L$.
\end{enumerate}
\end{enumerate}
 \end{defin}

 Note that, according to these definitions, every \emph{derived discrete} (in particular, \emph{derived finite} \cite{bm}) algebra
 \cite{v} is derived tame (with the empty set  $\dP$).

It is proved in \cite{bd} that every finite dimensional algebra over an
algebraically closed field is either derived tame or derived wild.

\section{Some classes of algebras}\label{s2}

\subsection{Quivers with relations}\label{s21}

A \emph{quiver} $\kQ$ is a tuple $(\kQ_0,\kQ_1,\mathfrak{s},
\mathfrak{t})$ consisting of a set $\kQ_0$ of \emph{vertices}, a
set $\kQ_1$ of \emph{arrows}, and maps $\mathfrak{s},\mathfrak{t}: \kQ_1
\rightarrow \kQ_0$ which specify the \emph{starting} and \emph{ending} vertices.
A \emph{path} $p$ in $\kQ$ of length $\ell(p) = n \geq 1$ is a
sequence of arrows $a_n,\dots,a_1$ such that
$\mathfrak{s}(a_{i+1}) = \mathfrak{t}(a_i)$ for $1 \leq i < n$.
Note that we write paths from  right to left for convenience.
 For a path $p$ set $\mathfrak{s}(p) = \mathfrak{s}(a_1)$ and $\mathfrak{t}(p) =
\mathfrak{t}(a_n)$. Then the concatenation $p^{\prime}p$ of two paths $p$,
$p^{\prime}$ is defined in the natural way whenever $\mathfrak{s}(p')=\mathfrak{t}(p)$.
Every vertex $i \in \kQ_0$ determines a path $e_i$ (of length $0$) with $\mathfrak{s}(e_i) = i$ and
$\mathfrak{t}(e_{i}) = i$.
A quiver $\kQ$ determines  the path algebra $\Mk\kQ$, which has an
$\Mk$-basis consisting of the paths of $\kQ$ with multiplication
given by the concatenation of paths.
The algebra $\Mk\kQ$ is finite-dimensional precisely when $\kQ$ does not contain an oriented cycle.
An ideal $\kI\subseteq \Mk\kQ$ is called \emph{admissible} if $\kI\subseteq \rad^{2}(\Mk\kQ)$ where $\rad(\Mk\kQ)$ is the radical of the algebra $\Mk\kQ$.
It is well-known that if $\Mk$ is algebraically closed, any finite-dimensional $\Mk$-algebra is Morita
equivalent to a quotient $\Mk\kQ/\kI$ where $\kI$ is an admissible ideal.
By a slight abuse of notation we identify paths in the quiver $\kQ$ with their cosets in $\Mk\kQ/\kI$.

\subsection{Nodal algebras}\label{s22}

\begin{defin}\label{noddef} A semi-perfect  noetherian algebra $\bA$  is called \emph{nodal} if it is
\emph{pure noetherian} (i.e. has no minimal ideals), and there is a hereditary algebra $\bH\supseteq\bA$, which
is semi-perfect and pure noetherian such that
\begin{itemize}
\item $\rad \bA=\rad \bH$.
\item $\lenght_{\bA}(\bH\otimes_{\bA}U)\leq 2$ for every simple left $\bA$-module $U$.
\item $\lenght_{\bA}(V\otimes_{\bA}\bH)\leq 2$ for every simple right $\bA$-module $V$.
\end{itemize}
\end{defin}

It was shown in \cite{d1} that nodal algebras are the only pure noetherian algebras such that the classification  of their modules of finite
length is tame (all others being wild).

\begin{prop}\label{nodal}

(1) Let $\bA$ be a  local $\Mk$-algebra.
Then $\bA$ is nodal if and only if it is isomorphic to one of the following algebras:
\begin{itemize}
\item The  algebra $\Mk [[x]]$ of power series.
\item The local ring $\Mk [[x,y]]/(xy)$  of a \emph{simple node} of an algebraic curve over $\Mk$.
\item The \emph{dihedral algebra} $\Mk \left<\left< x,y\right>\right> /(x^{2}, y^{2})$.
\end{itemize}

\medskip
(2) Let $\bA =\Mk\kQ/\kI$ be a  two-point algebra.
Then the following conditions are equivalent:

\begin{itemize}
\item $\bA $ is nodal.
\item $\bA $ is isomorphic to the completion of one of the algebras from Table~\ref{table_nodal} below.
\end{itemize}
\end{prop}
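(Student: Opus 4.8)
The plan is to treat the two parts separately, using in both cases the structural constraints imposed by Definition~\ref{noddef}: the existence of an overalgebra $\bH$ that is hereditary, semi-perfect, pure noetherian, shares the radical with $\bA$, and for which the length conditions $\lenght_\bA(\bH\*_\bA U)\le 2$ and $\lenght_\bA(V\*_\bA\bH)\le 2$ hold for all simple modules. For part (1), I would start from the observation that a complete local $\Mk$-algebra $\bA$ has residue field $\Mk$ (algebraically closed), so $\bA$ is generated over $\Mk$ by its radical $\gS=\rad\bA$, and $\gS/\gS^2$ has dimension $1$ or $2$ (dimension $0$ would make $\bA=\Mk$, which has a minimal ideal and is not pure noetherian in the required sense; in any case it has length $1$). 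If $\dim\gS/\gS^2=1$, purity and noetherianity force $\bA\cong\Mk[[x]]$ — this is the classical fact that a complete noetherian local domain of embedding dimension one with residue field $\Mk$ is a formal power series ring, and the no-minimal-ideals hypothesis rules out $\Mk[x]/(x^n)$. If $\dim\gS/\gS^2=2$, pick generators $x,y$; the hereditary overalgebra $\bH$ with $\rad\bH=\gS$ must be a (complete, semi-perfect, pure noetherian) hereditary algebra, and by the length-two conditions $\bH$ has at most two simple modules and $\bH/\rad\bH$ has length $\le 2$ over $\bA$; the only such $\bH$ are (up to Morita equivalence and completion) $\Mk[[x]]$ itself or $\Mk[[x]]\times\Mk[[x]]$, realized inside $M_2(\Mk[[t]])$-type algebras. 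Tracing back which complete local subalgebras $\bA\subseteq\bH$ with $\rad\bA=\rad\bH$ arise, one recovers exactly $\Mk[[x,y]]/(xy)$ (the node, sitting diagonally in $\Mk[[x]]\times\Mk[[x]]$) and $\Mk\langle\langle x,y\rangle\rangle/(x^2,y^2)$ (the dihedral algebra, whose normalization is a $2\times 2$ matrix-type hereditary order over $\Mk[[t]]$). Excluding all other embedding-dimension-two cases (e.g. $\Mk[[x,y]]/(y^2-x^3)$ or $\Mk[[x,y]]/(x^2)$) amounts to checking that their normalizations violate one of the length-two conditions or fail to be pure noetherian.

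For part (2), the equivalence between being nodal and being (the completion of) an algebra in Table~\ref{table_nodal} is proved by the same strategy, now for a two-point algebra $\bA=\Mk\kQ/\kI$ with $|\kQ_0|=2$. Write $\gS=\rad\bA$ and let $e_1,e_2$ be the primitive idempotents. Since $\bA$ is pure noetherian and semi-perfect, I would first describe the hereditary overalgebra $\bH$: it is semi-perfect, pure noetherian, hereditary, with $\rad\bH=\gS$, so $\bH$ is a finite direct product of (completed) hereditary orders, and each orbit of idempotents of $\bA$ maps into a bounded-length piece of $\bH$. The conditions $\lenght_\bA(\bH\*_\bA U_i)\le 2$ for $i=1,2$ severely restrict the Peirce components $e_j\gS e_i/e_j\gS^2 e_i$: in each "direction" of the quiver the multiplicity of arrows is bounded, and the relations in $\kI$ must be exactly those making $\bA$ embed with shared radical into such an $\bH$. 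This yields a finite list of quivers-with-relations, which I would then match against Table~\ref{table_nodal}; conversely, for each entry of the table one exhibits the overalgebra $\bH$ explicitly (typically a product of $\Mk[[t]]$'s and $2\times2$ matrix orders over $\Mk[[t]]$) and verifies the three bullet points of Definition~\ref{noddef} by direct computation of radicals and tensor lengths.

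The main obstacle I anticipate is the completeness/classification bookkeeping in the two-point case: enumerating all quivers-with-relations on two vertices that admit a hereditary, semi-perfect, pure noetherian overalgebra with the same radical and the two length-$\le 2$ conditions, without missing a case and without double-counting isomorphic algebras. Concretely, one must handle all the shapes of two-point quivers (one arrow each way, a loop at one vertex plus an arrow, two loops, etc.), determine for each the admissible relation ideals $\kI$, and then identify which of the resulting algebras are genuinely pure noetherian (no minimal ideals) after completion — this last point is where superfluous candidates like the finite-dimensional truncations get eliminated. A secondary technical point is making precise the notion of "completion of an algebra from Table~\ref{table_nodal}": one must check that passing to the completion does not change the nodal property, which follows because $\rad\bA$, the overalgebra $\bH$, and the tensor-length conditions all behave well under $\gS$-adic completion for a noetherian semi-perfect algebra. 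Once these are in place, the verification that each listed algebra is nodal is routine, and the reverse inclusion follows from the case analysis.
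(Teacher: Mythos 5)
Your proposal follows essentially the same route as the paper: both classify the hereditary overalgebra $\bH$ as a product of copies of $\Mk[[t]]$ and completed cyclic-quiver orders $\bH_n\subseteq\Mat(n,\Mk[[t]])$, bound $\dim_\Mk\bH/\rad\bH$ via the length condition applied to $\bH\otimes_\bA(\bA/\rad\bA)\simeq\bH/\rad\bH$, and then recover $\bA$ as the preimage of the embedded copy of $\Mk$ (resp.\ $\Mk^2$) inside $\bH/\rad\bH$. The only adjustments I would make are that your bound $\dim\gS/\gS^2\le 2$ in part (1) should be \emph{derived} from this length computation rather than asserted at the outset, and that the bookkeeping in part (2) you rightly flag as the main obstacle is most cleanly organized, as in the paper, by enumerating the possible $\bH$ with $\dim_\Mk\bH/\rad\bH\in\{3,4\}$ (seven cases) rather than by quiver shapes.
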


\begin{table}[ht]
\caption{Nodal two-point algebras}\label{table_nodal}
\renewcommand\arraystretch{1.5}
\noindent\[
\begin{array}{|c|c|}
\hline
\xymatrix{
1 \ar@/^1pc/[r]^{a}  &2 \ar@/^1pc/[l]_{b}
}&\xymatrix{
1 \ar@/^1pc/[r]^{b}  \ar@/^2pc/[r]^{a} &2 \ar@/^1pc/[l]^{c} \ar@/^2pc/[l]^{d}
}\\
(1)\hspace{10pt} \kI=0&(2)\hspace{10pt}\kI=\left<ca, db, ac, bd\right>\\
&(3)\hspace{10pt} \kI=\left<ca, db, bc, ad\right>\\
\hline
\xymatrix{
1 \ar@/^1pc/[r]^{b} \ar@(ul,dl)[]_{a}  &2 \ar@/^1pc/[l]^{c}
}&\xymatrix{
1 \ar@/^1pc/[r]^{c} \ar@(ul,dl)[]_{a}  &2 \ar@/^1pc/[l]^{d} \ar@(ur,dr)[]^{b}
}\\
(4)\hspace{10pt} \kI=\left<a^{2}, bc\right>&(6)\hspace{10pt}\kI=\left<a^{2}, b^{2}, dc, cd\right>\\
(5)\hspace{10pt} \kI=\left<ba, ac\right>&(7)\hspace{10pt}\kI=\left<a^{2}, db, bc, cd\right>\\
&(8)\hspace{10pt}\kI=\left<ca, db, bc, ad\right>\\
&(9)\hspace{10pt}\kI=\left<a^{2}-dc, b^{2}-cd,ca-bc,db-ad\right>\\
\hline
\end{array}
\]
\end{table}

\begin{proof}
 All algebras under
 consideration  are of the form $\bA=\wQ/\kI$ for some finite connected quiver
 $\kQ$ and some admissible ideal $\kI\subseteq\wQ$. In particular, $\dim_\Mk U=1$
 for every simple $\bA$-module $U$.
  Recall first that every hereditary pure noetherian algebra of this form is isomorphic to
 a direct product of algebras of type $\wQ_n$, where $\kQ_n$ is a cycle
  \[
  \xymatrix{ 1 \ar[r] & 2 \ar[r] & \dots & n \ar@{<-}[l] \ar@/^1pc/[lll] },
 \]

\medskip\noindent
 or, equivalently, subalgebras $\bH_n$ in $\Mat(n,\bS)$, where $\bS=\Mk[[t]]$,
 consisting of all matrices $(a_{ij})$ such that $a_{ij}(0)=0$ for $i<j$.
 If $\bA$ satisfies the conditions of Definition~\ref{noddef}, then the algebra
 $\bH$  is Morita-equivalent to an algebra of this form. Let $\bJ=\rad\bA
 =\rad\bH$. Note that $\bH/\bJ\simeq\bH\*_\bA(\bA/\bJ)$ as left $\bH$-module.

 If $\bA$ is local, then $d=\dim_\Mk\bH/\bJ\le2$. If $d=1$, $\bA=\bH\simeq\bS$.
 If $d=2$, then either $\bH\simeq\bS\xx\bS$ or $\bH\simeq\bH_2$. In both cases
 $\bH/\bJ\simeq\Mk\xx\Mk$ and $\bA/\bJ\simeq\Mk$ can be embedded into $\bH/\bJ$
 only diagonally. Therefore, in the former case $\bA$ is identified with the subalgebra in $\bS\xx\bS$
 consisting of all pairs $(a,b)$ such that $a(0)=b(0)$, i.e. $\bA\simeq\Mk[[x,y]]/(xy)$ (take $(t,0)$ for
 $x$ and $(0,t)$ for $y$). In the latter case $\bA$ is identified with the subalgebra in $\bH_2$
 consisting of  matrices $(a_{ij})$ such that $a_{11}(0)=a_{22}(0)$, i.e.
 $\bA\simeq\langle x,y\rangle/(x^2,y^2)$ (take $e_{21}$ for $x$ and $te_{12}$ for $y$).

 If $\bA$ is two-point, i.e. $\bA/\bJ\simeq\Mk^2$, then $d=\dim_\Mk\bH/\bJ\le 4$. Note
 that if $d=2$, then $\bA=\bH\simeq\wQ_2$. So we can assume that
 $d=3$ or $4$. There are the following possibilities (taking into account that
 $\bA$ is connected):

 {\sc Case 1. } $\bH=\Mat(2,\bS)$. Then $\bH/\bJ\simeq\Mat(2,\Mk)$. Any
 subalgebra of $\Mat(2,\Mk)$ isomorphic to $\Mk^2$ is conjugate to the subalgebra of
 diagonal matrices. Therefore, $\bA$ is isomorphic to the subalgebra of $\Mat(2,\bS)$
 consisting of matrices $(a_{ij})$ such that $a_{12}(0)=a_{21}(0)=0$, i.e. to the algebra (9)
 from Table~\ref{table_nodal} (take $te_{11}$ for $a$, $te_{22}$ for $b$, $te_{21}$ for $c$
 and $te_{12}$ for $d$).

 {\sc Case 2. } $\bH=\bH_3$. Then $\bH/\bJ\simeq\Mk^3$ and the embedding
 $\Mk^2\to\Mk^3$ (up to a permutation of components) maps $(\al,\be)$ to $(\al,\al,\be)$.
 Therefore, $\bA$ is isomorphic to the subalgebra of $\bH_3$ consisting of  matrices
 $(a_{ij})$ such that $a_{ii}(0)=a_{jj}(0)$ for some choice of two different indices $i,j\in\{1,2,3\}$.
 One can check that all choices lead to isomorphic algebras, namely, to the algebra (4) from
 Table~\ref{table_nodal} (if $i=1,\,j=2$, take $e_{21}$ for $a$, $e_{32}$ for $b$ and $te_{13}$ for $c$).

 {\sc Case 3. } $\bH=\bS\xx\bH_2$. Again $\bH/\bJ\simeq\Mk^3$ and the embedding
 $\Mk^2\to\Mk^3$ (up to a permutation of components) maps $(\al,\be)$ to $(\al,\al,\be)$.
 Therefore, $\bA$ is isomorphic to the subalgebra of $\bH$ consisting of all  pairs $(a,(b_{ij}))$ such
 that $a(0)=b_{ii}(0)$ for some $i\in\{1,2\}$. Again both choices lead to isomorphic algebras,
 namely, to the algebra (5) from Table~\ref{table_nodal} (if $i=1$, take the pair $(t,0)$ for $a$, $(0,e_{21})$
 for $b$ and $(0,te_{12})$ for $c$).

 {\sc Case 4. } $\bH=\bH_4$. Then $\bH\simeq\Mk^4$ and the embedding
 $\Mk^2\to\Mk^4$ (up to a permutation of components) maps $(\al,\be)$ to $(\al,\al,\be,\be)$
 or to $(\al,\al,\al,\be)$. The latter case is impossible, since the length of $\bH\*_\bA U$ equals $3$,
 where $U$ is the simple $\bA$-module on which the first component of $\Mk^2$ acts nontrivially.
 Hence, to define $\bA$ up to an isomorphism we need to choose an index $k\in\{2,3,4\}$; then $\bA$
 is isomorphic to the subalgebra of $\bH$ consisting of all  matrices $(a_{ij})$ such
 that $a_{11}(0)=a_{kk}(0)$ and $a_{ii}(0)=a_{jj}(0)$, where $\{1,2,3,4\}=\{1,k,i,j\}$. One easily
 sees that the choices $k=2$ and $k=4$ lead to isomorphic algebras, and they are isomorphic to
 the algebra (6) from Table~\ref{table_nodal} (for $k=2$ take $e_{21}$ for $a$, $e_{43}$ for $b$, $e_{32}$
 for c and $te_{14}$ for $d$). The case $k=3$ gives the algebra (3) from Table~\ref{table_nodal} (take $te_{14}$
 for $a$, $e_{32}$ for $b$, $e_{43}$ for $c$ and $e_{21}$ for $d$).

 {\sc Case 5. } $\bH=\bS\xx\bH_3$. The same considerations as in Case 4 show that $\bA$ is
  isomorphic to the subalgebra of $\bH$ consisting of all  pairs $(a,(b_{ij}))$ such
 that $a(0)=b_{11}(0)$ and $a_{33}(0)=a_{22}(0)$, i.e. to the algebra (7) from Table~\ref{table_nodal}
 (take the pair $(0,e_{32})$ for $a$, $(t,0)$ for $b$, $(0,te_{13})$ for c and $(0,e_{21})$ for $d$).

 {\sc Case 6. } $\bH=\bH_2\xx\bH_2$. It follows, as above, that $\bA$ is isomorphic to the
 subalgebra of $\bH$ consisting of all pairs $((a_{ij}),(b_{ij}))$ such that $a_{ii}(0)=b_{ii}(0)$ for $i=1,2$,
 i.e. to the algebra (2) from Table~\ref{table_nodal} (take the pair $(0,te_{12})$ for $a$, $(te_{12},0)$ for $b$,
 $(e_{21},0)$ for c and $(0,e_{21})$ for $d$).

 {\sc Case 7. } $\bH=\bS\xx\bS\xx\bH_2$. Then $\bA$ is isomorphic to the subalgebra of $\bH$
 consisting of  triples $(a,b,(c_{ij}))$ such that $a(0)=c_{11}(0)$ and $b(0)=c_{22}(0)$, i.e.
 to the algebra (8) from Table~\ref{table_nodal} (take the triple $(t,0,0)$ for $a$, $(0,t,0)$ for $b$,
 $(0,0,te_{12})$ for c and $(0,0,e_{21})$ for $d$).

\end{proof}

\subsection{Gentle algebras}\label{s23}

Let $\kQ$ be a quiver and $\kI$ an admissible ideal in the path algebra  $\Mk\kQ$.
\begin{defin}
 \label{specialbi}
The pair $(\kQ,\kI)$ is said to be \emph{special biserial}  if
the following holds:
\begin{itemize}
\item[(G1)] At every vertex of $\kQ$ at most two arrows end and at most two
arrows start.
\item[(G2)] For each arrow $b$ there is at most one arrow $a$ with
$\mathfrak{t}(a)=\mathfrak{s}(b)$ and $ba\not\in \kI$ and at most one arrow $c$ with
$\mathfrak{t}(b)=\mathfrak{s}(c)$ and $cb\not\in \kI$.
\end{itemize}
\end{defin}

\begin{defin}\label{defgentle}
The pair $(\kQ,\kI)$ is said to be \emph{gentle}  if
it is special biserial, and moreover the following holds:

\begin{itemize}
\item[(G3)] $\kI$ is generated by zero relations of length 2.
\item[(G4)] For each arrow $b$ there is at most one arrow $a$ with
$\mathfrak{t}(a)=\mathfrak{s}(b)$ and $ba\in \kI$ and at most one arrow $c$ with
$\mathfrak{t}(b)=\mathfrak{s}(c)$ and $cb\in \kI$.
\end{itemize}
\end{defin}

\begin{defin}\label{defgentlealg}
A $\Mk$-algebra $\bA$ is called \emph{special biserial} (respectively, \emph{gentle}),
if it is Morita equivalent to the completion of an  algebra $\Mk\kQ/\kI$,
where the pair $(\kQ,\kI)$ is special biserial (respectively, gentle).

\end{defin}

\begin{erem}
Note that Definitions~\ref{specialbi}, \ref{defgentle}, \ref{defgentlealg} do not require the finite dimensionality of the algebra $\bA$. In the finite dimensional case special biserial algebras were defined in \cite{sw}, while gentle algebras were defined in \cite{as}. Also note that gentle algebras without completion appeared in \cite{bh} under the name \emph{locally gentle} algebras.

\end{erem}

The proof of the following statement is straightforward.

\begin{prop}\label{gentle}
(1) Let  ${\bA}$ be a complete local algebra over $\Mk$. Then $\bA $ is gentle  if
and only if $\bA$ is isomorphic to one of the following algebras:
\begin{itemize}
\item $\bL_1= \Mk.$
\item $\bL_2=\Mk[x]/(x^{2}).$
\item  $\bL_3=\Mk [[x]]$.
\item  $\bL_4=\Mk [[x,y]]/(xy)$.
\item  $\bL_5=\Mk \left<\left< x,y\right>\right> /(x^{2}, y^{2})$.
\end{itemize}

\medskip

(2) Let $\Mk\kQ/\kI$ be a two-point algebra over  $\Mk$ and $\bA$ its completion.
Then the following conditions are equivalent:

\begin{itemize}
\item $\bA$ is gentle.
\item $\bA$ is isomorphic to one of the algebras from  Table~\ref{table_gentle} below.
\end{itemize}
\end{prop}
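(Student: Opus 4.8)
The plan is to carry out the classification by the same kind of explicit analysis used in the proof of Proposition~\ref{nodal}, organizing the argument around the number and shape of arrows permitted by conditions (G1)--(G4).

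First I would treat the local case (1). A complete local gentle algebra is, by Definition~\ref{defgentlealg}, the completion of $\Mk\kQ/\kI$ where $\kQ$ has a single vertex; so all arrows are loops. By (G1) there are at most two loops, so $|\kQ_1|\in\{0,1,2\}$. If there are no loops we get $\bL_1=\Mk$. If there is one loop $x$, then (G3) forces $\kI$ to be generated by zero relations of length $2$, i.e.\ either $\kI=0$, giving $\bL_3=\Mk[[x]]$, or $\kI=(x^2)$, giving $\bL_2=\Mk[x]/(x^2)$ (note the completion of $\Mk[x]/(x^2)$ is itself). If there are two loops $x,y$: condition (G2) says at most one of $x^2,yx$ lies outside $\kI$ and at most one of $x^2,xy$ lies outside $\kI$, and symmetrically for $y$; combined with (G3) (length-$2$ zero relations only) and (G4) (at most one of $x^2,yx$ lies \emph{in} $\kI$, etc.), a short case check on which of the four length-$2$ paths $x^2,y^2,xy,yx$ are in $\kI$ shows the only admissible possibility (up to swapping $x$ and $y$ and up to isomorphism) is $\kI=(x^2,y^2)$, giving $\bL_5$; allowing $xy$ or $yx$ to survive would violate special biseriality or admissibility, and one must also rule out non-completeness issues — but $\Mk[[x,y]]/(xy)$ arises here as the completion when $\kI=(xy)$, yielding $\bL_4$ (here $x,y$ are loops and $\kI=(xy,yx)$ would have to be checked; in fact $\bL_4$ comes from $\kI=(yx)$ together with the other length-$2$ relation conventions — this is exactly the bookkeeping to be done carefully). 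Conversely each $\bL_i$ is visibly the completion of a gentle pair.

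Then I would do the two-point case (2) the same way. Here $\kQ_0=\{1,2\}$ and connectedness forces at least one arrow $1\to2$ and at least one arrow $2\to1$ (a loop alone at one vertex cannot connect). By (G1) at each vertex at most two arrows start and at most two end, which bounds $|\kQ_1|$; one enumerates the finitely many possible underlying quivers (classified by how many arrows go $1\to2$, $2\to1$, and how many loops sit at $1$ and at $2$, subject to the (G1) in/out bounds). For each such quiver one lists the length-$2$ paths, and the admissible ideals $\kI$ generated by length-$2$ zero relations that satisfy (G2) and (G4); finitely many bookkeeping cases. Each surviving $(\kQ,\kI)$ is then matched, up to isomorphism of quivers with relations and up to completion, with an entry of Table~\ref{table_gentle}; conversely one checks every entry of the table is gentle. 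Since the proposition is asserted with ``the proof is straightforward,'' the write-up can be brief: state that the enumeration is routine, present the reduction to the finitely many quiver shapes, and refer to the table for the outcome.

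The main obstacle is not any single hard idea but the completeness bookkeeping: because Definition~\ref{defgentlealg} uses the \emph{completion} of $\Mk\kQ/\kI$, one must be careful that (i) different $(\kQ,\kI)$ can have isomorphic completions, so the list must be deduplicated, and (ii) the relevant algebras (e.g.\ $\Mk[[x]]$, $\Mk[[x,y]]/(xy)$, $\Mk\langle\langle x,y\rangle\rangle/(x^2,y^2)$) are genuinely the completions — finite-dimensionality is not available, so one cannot simply quote the finite-dimensional classification of gentle algebras from \cite{as}. Concretely I would fix, for each admissible $(\kQ,\kI)$, an explicit isomorphism of its completion with a table entry (as was done with explicit matrix generators in Proposition~\ref{nodal}), and conversely exhibit each table algebra as such a completion; checking that no two table entries coincide and that the (G2)/(G4) constraints exclude everything else is the part that takes care rather than cleverness.
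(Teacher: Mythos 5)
The paper itself offers no written proof of Proposition~\ref{gentle} (it is declared ``straightforward''), and your strategy --- enumerate the quivers allowed by (G1) and then the ideals allowed by (G2)--(G4) --- is surely the intended one. However, as written your argument contains two concrete errors that would change the answer.

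First, in part (2) you assert that connectedness forces at least one arrow $1\to2$ \emph{and} at least one arrow $2\to1$. This is false: an arrow $1\to2$ alone already makes the quiver connected, and indeed the quivers $\kQ1$, $\kQ2$, $\kQ6$, $\kQ7$ and $\kQ9$ of Table~\ref{table_gentle} have no arrow $2\to1$. Your reduction would therefore wrongly discard the algebras $(1)$, $(2)$, $(10)$--$(13)$ and $(18)$--$(21)$, i.e.\ roughly half of the table. The correct enumeration is simply: choose nonnegative numbers of arrows $1\to2$, $2\to1$, loops at $1$ and loops at $2$, subject to the in/out bounds of (G1) and to connectedness of the underlying graph (which only requires \emph{some} arrow between the two vertices).

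Second, your treatment of the local two-loop case is garbled. Writing the four length-$2$ paths as $x^2,y^2,xy,yx$, conditions (G2) and (G4) together force \emph{exactly} one member of each of the pairs $\{x^2,xy\}$, $\{x^2,yx\}$, $\{y^2,xy\}$, $\{y^2,yx\}$ to lie in $\kI$, and with (G3) this leaves exactly two ideals: $\kI=(x^2,y^2)$, whose completion is $\bL_5$, and $\kI=(xy,yx)$, whose completion is $\Mk\langle\langle x,y\rangle\rangle/(xy,yx)\simeq\Mk[[x,y]]/(xy)=\bL_4$. Your tentative suggestion that $\bL_4$ arises from $\kI=(yx)$ alone is wrong: if only $yx\in\kI$ then both $x^2$ and $xy$ survive, so two arrows $a$ satisfy $xa\notin\kI$, violating (G2); and your earlier claim that $(x^2,y^2)$ is ``the only admissible possibility'' contradicts the presence of $\bL_4$ in the statement. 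Once these two points are repaired, the rest of your plan (finitely many quiver shapes, finitely many admissible ideals, explicit identification of completions with table entries, deduplication up to isomorphism) is sound and is exactly the bookkeeping the authors leave to the reader.
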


\begin{table}[ht]
\caption{Gentle two-point algebras}\label{table_gentle}
\renewcommand\arraystretch{1.5}
\noindent\[
\begin{array}{|c|c|}
\hline
\kQ 1: \xymatrix{
1 \ar[r]^{a}&2
}&\kQ 2:\xymatrix{
1 \ar@/^1pc/[r]^{a} \ar@/_1pc/[r]_{b} &2
}\\
(1)\hspace{10pt} \kI=0&(2)\hspace{10pt}\kI=0\\
\hline
\kQ 3: \xymatrix{
1 \ar@/^1pc/[r]^{a}  &2 \ar@/^1pc/[l]_{b}
}&\kQ 4: \xymatrix{
1 \ar@/^1pc/[r]^{b}  \ar@/^2pc/[r]^{a} &2 \ar@/^1pc/[l]^{c}
}\\
(3)\hspace{10pt} \kI=0&(6)\hspace{10pt}\kI=\left<ca, bc\right>\\
(4)\hspace{10pt} \kI=\left<ba\right>&(7)\hspace{10pt}\kI=\left<ca, ac\right>\\
(5)\hspace{10pt} \kI=\left<ba, ab\right>&\\
\hline
\kQ 5: \xymatrix{
1 \ar@/^1pc/[r]^{b}  \ar@/^2pc/[r]^{a} &2 \ar@/^1pc/[l]^{c} \ar@/^2pc/[l]^{d}
}&\kQ 6: \xymatrix{
1 \ar[r]^{b} \ar@(ul,ur)[]^{a}  &2
}\\
(8)\hspace{10pt} \kI=\left<ca, db, ac, bd\right>&(10)\hspace{10pt}\kI=\left<a^{2}\right>\\
(9)\hspace{10pt} \kI=\left<ca, db, bc, ad\right>&(11)\hspace{10pt}\kI=\left<ba\right>\\
\hline
\kQ 7: \xymatrix{
1 \ar[r]^{b}  &2 \ar@(ul,ur)[]^{a}
}&\kQ 8: \xymatrix{
1 \ar@/^1pc/[r]^{b} \ar@(ul,dl)[]_{a}  &2 \ar@/^1pc/[l]^{c}
}\\
(12)\hspace{10pt} \kI=\left<a^{2}\right>&(14)\hspace{10pt}\kI=\left<a^{2}, bc\right>\\
(13)\hspace{10pt} \kI=\left<ab\right>&(15)\hspace{10pt}\kI=\left<ba, ac\right>\\
&(16)\hspace{10pt}\kI=\left<a^{2}, bc, cb\right>\\
&(17)\hspace{10pt}\kI=\left<ba, ac, cb\right>\\
\hline
\kQ 9: \xymatrix{
1 \ar[r]^{c} \ar@(ul,ur)[]^{a}  &2 \ar@(ul,ur)[]^{b}
}&\kQ 10: \xymatrix{
1 \ar@/^1pc/[r]^{c} \ar@(ul,dl)[]_{a}  &2 \ar@/^1pc/[l]^{d} \ar@(ur,dr)[]^{b}
}\\
(18)\hspace{10pt} \kI=\left<a^{2}, b^{2}\right>&(22)\hspace{10pt}\kI=\left<a^{2}, b^{2}, dc, cd\right>\\
(19)\hspace{10pt} \kI=\left<a^{2}, bc\right>&(23)\hspace{10pt}\kI=\left<a^{2}, db, bc, cd\right>\\
(20)\hspace{10pt} \kI=\left<ca, b^{2}\right>&(24)\hspace{10pt}\kI=\left<ca, db, bc, ad\right>\\
(21)\hspace{10pt} \kI=\left<ca, bc\right>&\\
\hline
\end{array}
\]
\end{table}

\begin{erem} Note that algebras $(3)$, $(8)$, $(9)$, $(14)$, $(15)$ and $(22)-(24)$ from Table~\ref{table_gentle} are nodal.
Note also that algebras $(7)$, $(11)$, $(13)$, $(17)$ and $(19)-(21)$ are infinite dimensional but not nodal.
\end{erem}

It was shown in \cite{ps} that any finite dimensional gentle algebra is derived tame. The proof of this result from \cite{ps} can not be adapted for the case of infinite dimensional gentle algebras. On the other hand, in \cite{bm} a different approach was used to obtain a classification of indecomposable objects in derived  categories over finite dimensional gentle algebras. This approach is based on the reduction of the classification problem to
 a matrix problem considered by Bondarenko in
\cite{bo}. We note that with minor modifications the same reduction works in the case of infinite dimensional gentle algebras. Hence we immediately obtain the following result.

\begin{theorem}\label{thm-gentle-tame}
Any gentle algebra is derived tame.

\end{theorem}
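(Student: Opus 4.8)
The plan is to follow the strategy indicated in the paragraph preceding the theorem: reduce the classification of indecomposable objects in $\kD(\bA)=\kK^{-,b}(\pr{\bA})$ for a gentle algebra $\bA$ to the matrix problem solved by Bondarenko in \cite{bo}, exactly as was done in \cite{bm} for finite dimensional gentle algebras, and then check that the reduction survives the passage to the (completed) infinite dimensional setting. First I would fix a presentation $\bA\simeq\widehat{\Mk\kQ}/\kI$ with $(\kQ,\kI)$ gentle, and work inside $\kP_{\min}(\bA)$, the category of minimal right bounded complexes of finitely generated projectives, which by the discussion in Section~\ref{s1} is equivalent (on objects and isomorphisms) to $\kD(\bA)$. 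The projectives are the $\bA e_i$; since $(\kQ,\kI)$ is special biserial, each $\rad^n\bA e_i/\rad^{n+1}\bA e_i$ is at most two-dimensional and the multiplication maps between the radical layers of the $\bA e_i$ are highly constrained. A differential $d_\bp$ with $\im d_n\subseteq\rad P_{n-1}$ is then encoded by matrices over $\bA$ whose entries are $\Mk$-linear combinations of paths; minimality and $d^2=0$ translate into the combinatorial bookkeeping of which paths can be composed. I would organize this with the standard device: each $\bA e_i$ is built from ``strings'' read off the quiver, and a morphism between projectives decomposes according to a small number of ``arrow'' and ``formal'' components.

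The core step is to show that, after a change of basis in each $P_n$ (allowed because isomorphisms of minimal complexes are genuine complex isomorphisms), the datum $(P_\bp,d_\bp)$ is equivalent to a representation of the bunch of chains / matrix problem of Bondarenko type: finitely many $\Mk$-matrices, grouped in vertical and horizontal strips indexed by the pairs (homological degree, vertex), with admissible row and column transformations and a partial order/equivalence on the strips dictated by (G1)--(G4). The gentleness conditions are precisely what guarantees that the composition relations among paths are ``free'' enough that no nonlinear relations among the matrix entries appear and the problem is genuinely one of Bondarenko's representations of bunches of semichains, whose indecomposables are the known ``strings'' and ``bands''. Having set up this equivalence of categories (or at least a representation equivalence respecting vector ranks and one-parameter families), derived tameness of $\bA$ follows from tameness of the Bondarenko matrix problem: strings give discretely many indecomposable complexes of each vector rank, and bands give the rational families $P_\bp\in\dP$ required by Definition~\ref{tw}(2), with the finiteness conditions (a) and (b) inherited from the corresponding finiteness in \cite{bo}.

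The main obstacle — and the only place where the finite dimensional argument of \cite{bm} genuinely needs modification — is that $\bA$ is now infinite dimensional, so the projectives $\bA e_i$ have infinite length and a string over $\kQ$ may be infinite. Concretely, in $\kK^{-,b}$ a complex is only \emph{right} bounded, and a single indecomposable minimal complex may involve infinitely many copies of the projectives in its left tail. I would handle this by observing that the boundedness of cohomology forces each such complex to be eventually (to the left) a periodic or a projective ``tail'', so that its string data, while possibly infinite, is of a controlled shape; this is exactly the phenomenon that already occurs for $\bL_3=\Mk[[x]]$ and $\bL_4=\Mk[[x,y]]/(xy)$. One then checks that Bondarenko's classification still applies to these (possibly infinite but locally finite) configurations of chains — the matrix problem decomposes as a ``limit'' of finite ones and its indecomposables are still strings and bands — and that the bookkeeping of vector ranks $\fR_\bp$ is unaffected since each $\fR_\bp$ is still a finite vector. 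The remaining verifications (that minimality is preserved, that $d^2=0$ imposes no extra constraints beyond the combinatorial ones, that the completed relations $\kI$ do not introduce new compositions) are routine given (G1)--(G4), so I would state them as ``straightforward modifications of \cite{bm}'' rather than carry them out in full.
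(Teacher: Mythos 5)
Your proposal follows exactly the route the paper takes: the paper's ``proof'' consists precisely of the remark that the reduction of \cite{bm} to Bondarenko's matrix problem \cite{bo} goes through for (completed) infinite dimensional gentle algebras with minor modifications, which is the strategy you elaborate. In fact your sketch is more detailed than the paper's, which simply asserts the claim without addressing the infinite-dimensionality issues you discuss; note only that, with Definition~\ref{tw} as stated, derived tameness concerns bounded complexes of projectives (each vector rank has finitely many nonzero entries), so the worry about infinite left tails is not needed for the theorem as formulated.
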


\subsection{Two deformations of gentle algebras}\label{s24}

Consider  the following quiver $\kQ$:

\[
\xymatrix{
1 \ar[r]^{c} \ar@(ul,ur)[]^{a}  &2 \ar@(ul,ur)[]^{b}
}
\]

Let $\bD_i=\Mk {\kQ}/\kI_i$, $i=1,2$, where
$\kI_1=\left<a^2, ca-bc\right>$ and
$\kI_2=\left<b^2, ca-bc\right>$. These two algebras are anti-isomorphic.

Consider $\bA_{\lambda}^1 =\Mk {\kQ}/\kI_1$, $\bA_{\lambda}^2 =\Mk {\kQ}/\kI_2$, where
$\kI_1=<a^2, bc-\lambda ca>$, $\kI_2=<b^2, ca-\lambda bc>$.
Note that $\bA_{\lambda}^1$ is a deformation of $(20)$, while $\bA_{\lambda}^2$ is a deformation of $(21)$ from Table~\ref{table_gentle}. Clearly, $\bA_{\lambda}^1\simeq \bD_1$ and $\bA_{\lambda}^2\simeq\bD_2$ for any $\lambda\neq 0$.

\begin{lemma}\label{def}
Algebras $\bD_1$ and $\bD_2$ are derived tame.
\end{lemma}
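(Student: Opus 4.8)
The plan is to exhibit, for each of $\bD_1$ and $\bD_2$, an explicit tilting complex (or a sequence of tilting/derived-equivalence steps) connecting $\bD_i$ to a gentle algebra, and then to invoke Theorem~\ref{thm-gentle-tame} together with the fact (implicit in the setup of \cite{bd}, and recalled after Definition~\ref{tw}) that derived tameness is a derived-invariant property. Since $\bD_1$ and $\bD_2$ are anti-isomorphic, it suffices to treat $\bD_1=\Mk\kQ/\langle a^2,\,ca-bc\rangle$, because an anti-isomorphism induces a duality $\kD(\bD_1)\to\kD(\bD_2^{\op})$ of triangulated categories which preserves the rational families of Definition~\ref{tw}.

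First I would analyze the structure of $\bD_1$ directly. The relation $ca=bc$ identifies the two length-$2$ paths from vertex $1$ to vertex $2$, so $\bD_1$ is a ``one-parameter'' deformation of the gentle algebra $(20)$ (with $\kI=\langle ca,b^2\rangle$) — except that here we impose $a^2=0$ and glue $ca$ to $bc$ rather than killing them, and we do \emph{not} impose $b^2=0$. I would compute the indecomposable projectives $P_1=\bD_1 e_1$ and $P_2=\bD_1 e_2$, their radical series, and thereby pin down the algebra as a string-like algebra with one ``commutativity'' relation. The natural candidate is to show that $\bD_1$ is derived equivalent to the nodal non-gentle algebra $(9)$ from Table~\ref{table_nodal} — note that $(9)$ has relations $\langle a^2-dc,\ b^2-cd,\ ca-bc,\ db-ad\rangle$, which contains precisely the commutativity-type relation $ca=bc$; since algebra $(9)$ is nodal, this would immediately give derived tameness provided one knows nodal algebras are derived tame. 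Failing a clean tilting argument, the fallback is to run the Bondarenko matrix-problem reduction of \cite{bm,bo} directly: the relations of $\bD_i$ are close enough to the gentle case that the associated bimodule problem is still a representation of a poset/matrix problem of the Bondarenko type, which is tame, and one reads off the parametrizing set $\dP$ from its tame classification.

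The key steps, in order, are: (i) reduce to $\bD_1$ via the anti-isomorphism; (ii) describe $\kP_{\min}(\bD_1)$ — equivalently $\kK^{-,b}(\pr{\bD_1})$ — by computing projective resolutions of simples and the syzygy structure induced by the relations $a^2=0$, $ca=bc$; (iii) either construct an explicit tilting complex realizing a derived equivalence to a gentle algebra or to the nodal algebra $(9)$, or set up the corresponding matrix/bimodule problem and identify it with a Bondarenko tame problem; (iv) transport the parametrizing set and the finiteness conditions (a) and (b) of Definition~\ref{tw} across this equivalence/identification, using that a derived equivalence sends a rational family to a rational family of the same vector-rank bookkeeping behaviour.

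The main obstacle I expect is step (iii): the commutativity relation $ca-bc$ is genuinely non-monomial, so $\bD_1$ is \emph{not} special biserial, and the clean string-combinatorics of \cite{bm} do not literally apply. One must either find the right tilting complex by hand — the danger being that the target could a priori be wild, which is why identifying it as the nodal algebra $(9)$ (known tame) is the safe route — or carefully check that the deformation term $ca-bc$ only modifies the matrix problem of \cite{bo} by a ``solvable'' (triangular) perturbation that does not change its tameness. A secondary subtlety is verifying that the deformation is rigid enough: since $\bA^1_\lambda\simeq\bD_1$ for all $\lambda\ne 0$ and degenerates to $(20)$ at $\lambda=0$, one should make sure the argument handles the parameter $\lambda$ uniformly rather than only at a single generic value, so that the conclusion is about $\bD_1$ itself and not merely about the generic member of the family.
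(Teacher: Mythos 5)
Your primary route coincides with the paper's: the proof there exhibits a tilting complex over the nodal algebra $(9)$ of Table~\ref{table_nodal} whose endomorphism algebra is $\bD_1$, and then invokes the derived tameness of nodal algebras from \cite{bud}, derived tameness being invariant under derived equivalence. So you identified the correct target algebra. The gap is that you leave the decisive step --- actually producing the tilting complex --- as a ``natural candidate,'' and your fallback is not viable: as you observe yourself, $ca-bc$ is non-monomial and $\bD_1$ is not special biserial, so the Bondarenko reduction of \cite{bm,bo} does not apply, and asserting that the deformation is a ``solvable/triangular perturbation'' of the matrix problem is a claim, not an argument. The paper closes the gap concretely: realize algebra $(9)$ as $\bA=\begin{bmatrix}\bS & t\bS\\ t\bS &\bS\end{bmatrix}$ with $\bS=\Mk[[t]]$, with indecomposable projectives $P_1=\begin{bmatrix}\bS \\ t\bS\end{bmatrix}$, $P_2=\begin{bmatrix}t\bS \\ \bS\end{bmatrix}$ and the short exact sequence $0\to P_1\to P_2\to L\to 0$ given by multiplication by $t$; then $T_\bp=T_0\+T_1$, where $T_0$ is the stalk complex $L$ in degree $-1$ and $T_1$ is the stalk complex $P_1$ in degree $0$, is a tilting complex with $\End_{\kD^{b}(\md{\bA})}(T_\bp)\iso\bD_1$, and $\bD_2$ is handled by the analogous construction (your anti-isomorphism reduction would serve equally well here). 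Your worry about handling the parameter $\lambda$ uniformly is moot: the lemma concerns only the two fixed algebras $\bD_1,\bD_2$, and $\bA_\lambda^i\iso\bD_i$ for all $\lambda\ne0$ is only used elsewhere as motivation.
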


\begin{proof} Let $\bA=\begin{bmatrix}\bS & t\bS\\t\bS &\bS\end{bmatrix}$, where $\bS=\Mk[[t]]$.

Then we have the following short exact sequence:

\[
\xymatrix{
0 \ar[r] & P_1 \ar[r]^{t} & P_2 \ar[r] & L\ar[r] & 0
}
\]

\noindent where $P_1=\begin{bmatrix}\bS \\t\bS \end{bmatrix}$, $P_2=\begin{bmatrix} t\bS\\\bS\end{bmatrix}$ are
indecomposable projective left $\bA$-modules and $L=\begin{bmatrix}0\\\frac{\bS}{t^{2}\bS}\end{bmatrix}$.

Define a complex $T_\bp=T_0\oplus T_1$ of $\bA$-modules as follows. Let $T_0:0\to L\to 0$ (in degree $-1$) and
$T_1:0\to P_1\to 0$ (in degree $0$). It is easy to check that the complex $T_\bp$ is tilting (see \cite{rk} for definition) and the
endomorphism algebra $\End_{\kD^{b}(\md{\bA})}(T_\bp)$ is isomorphic to  $\bD_1$.
Since $\bA$ is isomorphic to the algebra (9) from the Table~\ref{table_nodal}, it is derived tame by \cite{bud}.
Therefore algebra $\bD_1$ is also derived tame. The case of the algebra $\bD_2$ is similar.
\end{proof}

\begin{erem}
 Both algebras  $(20)$ and $(21)$ are derived tame by \cite{bm}. It is known that in the finite dimensional case the tameness of  an algebra implies the tameness of its deformations \cite{d3}. But it is an open question in the infinite dimensional case.

\end{erem}

\section{Classification}\label{s3}

\subsection{Derived wildness}\label{s31}

We will need the following  hereditary algebras which are used in the
next sections.

\begin{table}[ht]
\caption{Some wild hereditary algebras}\label{table_wild}
\renewcommand\arraystretch{1.5}
\noindent\[
\begin{array}{|c|c|}
\hline
\bW_1:&\bW_3:\\
\xymatrix{
1 \ar@/^1pc/[r]^{p} \ar@/_1pc/[r]_{q} &2 \ar[r]^{s} & 3
}&\xymatrix{
1 \ar[r]^{p_1} & 2 \ar[r]^{p_2} & 3 \ar[r]^{p_3} & 4   \\
& 5 \ar[u]^{q} & 6 \ar[u]^{r} & 7\ar[u]^{s}
}\\
\hline
\bW_2:&\bW_4:\\
\xymatrix{
 1 \ar[r]^{p} \ar[dr]_(.2){t} & 2 & \\
 3 \ar[r]^{q} \ar[ur]_(.8){s} & 4 \ar[r]^{r}  & 5
}&\xymatrix{
1 \ar[r]^{p_1} & 2 \ar[r]^{p_2} & 3 \ar[r]^{p_3}\ar[d]^{q} & 4 \ar[r]^{p_4}& 5  \ar[r]^{p_5}\ar[d]^{r}& 6\\
& & 7 & & 8   &
}\\
\hline
\end{array}
\]
\end{table}

It is well known that the algebras $\bW_1-\bW_4$ are wild \cite{n}.

We also need the following boxes  (see \cite{d0} for definition), which will be used in
the proof of Theorem~A and Theorem~B:

\[
 \xymatrix{
 & &  2 \ar[rd]^{q_1} & & & 6 \ar[rd]^{q_3} & &  \\
\bW_{5}:& 1 \ar[rd]_{p_2} \ar[ur]^{p_1} & & 4 \ar[r]^{r_1}& 5 \ar[rd]_{p_4} \ar[ur]^{p_3}&  & 8 \ar[r]^{r_2} & 9\\
 & &  3 \ar[ur]_{q_2}\ar@{-->}[uu]^{\varphi} & & & 7 \ar[ur]_{q_4}\ar@{-->}[uu]^{\psi}& &
 }
\]

\[
 \xymatrix{
 & &  2 \ar[rd]^{q_1} &  & 5 \ar[rd]^{q_3} & &  \\
\bW_{6}:& 1 \ar[rd]_{p_2} \ar[ur]^{p_1} & & 4  \ar[rd]_{p_4} \ar[ur]^{p_3}&  & 7 \ar[r]^{r} & 8\\
 & &  3 \ar[ur]_{q_2} \ar@{-->}[uu]^{\varphi}&  & 6 \ar[ur]_{q_4}\ar@{-->}[uu]^{\psi}& &
 }
\]

Let $f$  be the quadratic form corresponding to the box $\bW_5$ (resp., $\bW_6$)
(see \cite{d0} for definition). Consider the following dimension vector ${\bf
d}=(d_i)_{i=1}^{9}=(2,2,2,4,4,2,2,2,1)$ (resp., ${\bf
d}=(d_i)_{i=1}^{8}=(2,2,2,4,2,2,2,1)$). Since $f({\bf d})=-1$,
it follows from \cite{d0} that $\bW_5$ and $\bW_6$ are wild.

We will use the following notations. Let $\bB$ be one of the
algebras $\bW_1-\bW_4$ or one of  the boxes $\bW_5-\bW_6$. Since $\bB$ is wild, there exists
$\bB$-$\Mk\left< x,y\right>$-bimodule $M=M(\bB)$, finitely generated and free
 over $\kxy$ such that the functor $M\otimes_{\Mk\left< x,y\right>}\_$,
from the category of
finite dimensional $\Mk\left< x,y\right>$-modules to the category
of $\bB$-modules,
preserves indecomposability and
isomorphism classes. We denote by $d_i^{M}$ the rank of $M(i)$ over $\kxy$.

From now on let $\bA$ be the completion of an algebra $\Mk\kQ/\kI$ for
some finite quiver $\kQ$ and some admissible ideal $\kI$. We denote by $A_i$ the
 indecomposbale projective $\bA$-module corresponding to the vertex $i$ of $\kQ$
 and set $\tA_i=A_i\*_\Mk\kxy$.

The following technical lemmas are needed for the proof.

\begin{lemma}\label{subalg}
Let  $\bB$ be a full subalgebra of $\bA $
(i.e., a subalgebra of the form $e\bA e$ for some idempotent $e$).
If $\bB$ is derived wild then $\bA$ is derived wild.
\end{lemma}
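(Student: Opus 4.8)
The statement is the standard "full subalgebra inherits derived wildness" lemma, and the natural approach is to construct, from a derived-wildness witness for $\bB = e\bA e$, a derived-wildness witness for $\bA$. I would use the functor $\bA e \otimes_{\bB} -$ (equivalently, the exact functor induced on derived categories by the $\bA$-$\bB$-bimodule $\bA e$, which is projective as a left $\bA$-module), together with the observation that $e(\bA e) = \bB$ shows this functor is a section of the "multiplication by $e$" functor $e\bA \otimes_\bA - \colon \md\bA \to \md\bB$; more precisely $e\bA \otimes_\bA (\bA e \otimes_\bB X) \cong X$ naturally. The key point is that $\bA e$ is a projective left $\bA$-module, so $\bA e \otimes_\bB -$ sends projective $\bB$-modules to projective $\bA$-modules and hence extends to a functor $\kK^{-,b}(\pro\text{-}\bB) \to \kK^{-,b}(\pro\text{-}\bA)$, i.e. $\kD(\bB) \to \kD(\bA)$.

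The main steps are as follows. First, since $\bB$ is derived wild, fix a bounded complex $(Q_\bp, d_\bp)$ of projective $\bB\otimes\Si$-modules with $\im d_n \subseteq (\rad\bB)Q_{n-1}$ such that $Q_\bp \otimes_\Si L \iso Q_\bp \otimes_\Si L'$ \iff $L \iso L'$ and $Q_\bp\otimes_\Si L$ is indecomposable \iff $L$ is, for all finite-dimensional $\Si$-modules $L$. Second, apply the bimodule functor to get $P_\bp := (\bA e \otimes_\bB Q_\bp)$, a bounded complex of projective $\bA\otimes\Si$-modules, and check that $\im d_n$ still lands in $(\rad\bA)P_{n-1}$: this follows because $(\rad\bA)e = (\rad\bA)\cap \bA e$ and $e(\rad\bA)e = \rad(e\bA e) = \rad\bB$, so tensoring the containment $\im d_n \subseteq (\rad\bB)Q_{n-1}$ up along $\bA e\otimes_\bB-$ keeps it inside $(\rad\bA) P_{n-1}$ — or, more simply, one may first pass to a minimal representative and note that applying $\bA e\otimes_\bB-$ to a minimal complex over $\bB$ gives a complex over $\bA$ whose differential has image in $\rad\bA$ times the next term because $\bA e\otimes_\bB \rad\bB \subseteq \rad\bA\cdot(\bA e)$. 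Third, verify the two defining properties of a wildness witness transfer: for finite-dimensional $\Si$-modules $L, L'$ one has $P_\bp\otimes_\Si L = \bA e\otimes_\bB(Q_\bp\otimes_\Si L)$, and applying the retraction $e\bA\otimes_\bA -$ recovers $Q_\bp\otimes_\Si L$; hence $P_\bp\otimes_\Si L \iso P_\bp\otimes_\Si L'$ in $\kD(\bA)$ implies $Q_\bp\otimes_\Si L \iso Q_\bp\otimes_\Si L'$ in $\kD(\bB)$, whence $L\iso L'$. For indecomposability: if $P_\bp\otimes_\Si L$ decomposed nontrivially in $\kD(\bA)$, apply $e\bA\otimes_\bA-$ to split $Q_\bp\otimes_\Si L$; one must also check the converse direction, that $\bA e\otimes_\bB-$ does not "create" a decomposition, which follows from the natural isomorphism $\End_{\kD(\bA)}(\bA e\otimes_\bB C) \cong \End_{\kD(\bB)}(C)$ for $C$ a complex of projective $\bB$-modules (again using that $\bA e$ is projective over $\bA$ and $e(\bA e)=\bB$, so $\Hom_{\bA}(\bA e, \bA e\otimes_\bB C) \cong e\otimes_\bB(\bA e\otimes_\bB C)$ reduces to the identity-type computation).

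The part that needs the most care is the endomorphism-ring identification $\End_{\kD(\bA)}(\bA e\otimes_\bB C_\bp) \cong \End_{\kD(\bB)}(C_\bp)$, i.e. that the functor $\bA e\otimes_\bB -$ is fully faithful on the relevant subcategory of $\kD(\bB)$. The cleanest way I would argue it: for bounded-above complexes of projectives, $\Hom_{\kD(\bA)}(\bA e\otimes_\bB C_\bp, \bA e\otimes_\bB C'_\bp)$ is computed by the total Hom-complex $\Hom^\bp_\bA(\bA e\otimes_\bB C_\bp, \bA e\otimes_\bB C'_\bp)$, and since $\bA e$ is a finitely generated projective left $\bA$-module, $\Hom_\bA(\bA e, -)$ is exact and equals $e\cdot(-)$; combined with tensor–hom adjunction over $\bB$ and $e\bA e = \bB$, the total complex becomes $\Hom^\bp_\bB(C_\bp, e\bA\otimes_\bB\cdots)$ — here one uses $e\bA\otimes_\bA(\bA e\otimes_\bB C'_\bp) \cong (e\bA e)\otimes_\bB C'_\bp = C'_\bp$ — so it is isomorphic to $\Hom^\bp_\bB(C_\bp, C'_\bp)$, giving the claim on cohomology and in particular in degree $0$. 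Once full faithfulness is in hand, both conditions (a) and (b) of Definition~\ref{tw}(3) for $\bA$ follow formally from those for $\bB$, completing the proof.
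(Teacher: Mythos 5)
Your argument is correct; the paper's own proof of this lemma is literally the single word ``Obvious,'' and what you have written is the standard justification the authors have in mind: inducing along the $\bA$-$\bB$-bimodule $\bA e$, which is projective on the left, is fully faithful on complexes of projectives (via $\Hom_\bA(\bA e,-)=e\cdot(-)$ and $e\bA e=\bB$), preserves the radical condition since $\rad(e\bA e)=e(\rad\bA)e$, and admits $e\bA\otimes_\bA-$ as a retraction, so a wildness witness for $\bB$ transports to one for $\bA$. All the details you supply (projectivity over $\bA\otimes\Si$, the Hom-complex computation, transfer of both conditions of Definition~\ref{tw}(3)) check out.
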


\begin{proof}
Obvious.
\end{proof}

\begin{lemma}\label{wildpaths}
Suppose that there exist
$a,b\in \kQ_1$ and $w=\sum_i \la_iw_i\ne 0$, where $w_i$ are some paths of length $\geq 1$, such that
$\mathfrak{s}(w_i)=\mathfrak{s}(w_j)$ and $\mathfrak{t}(w_i)=\mathfrak{t}(w_j)$ for all $i,j,$ $\la_i\in
\Mk$, $\mathfrak{s}(a)=\mathfrak{s}(b)$, $\mathfrak{t}(a)=\mathfrak{t}(b)$, $\mathfrak{t}(a)=\mathfrak{s}(w)$ (resp. $\mathfrak{s}(a)=\mathfrak{t}(w)$)
and $wa, wb\in \kI$ (resp., $aw, bw\in\kI$).
Then  $\bA $ is  derived wild.
\end{lemma}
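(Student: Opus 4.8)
The plan is to reduce, via Lemma~\ref{subalg} together with the wild boxes $\bW_5$ and $\bW_6$, the derived-wildness of $\bA$ to the construction of an explicit complex of projective $\bA$-modules realizing the wild bimodule $M(\bW_5)$ (resp.\ $M(\bW_6)$) as a representation-embedding functor $M\otimes_{\kxy}\_$. The geometric picture behind the boxes is the following: the arrow $a$ contributes, at the level of minimal projective complexes, a map $\tA_{\mathfrak{s}(a)}\to\tA_{\mathfrak{t}(a)}$, and similarly for $b$; the relation $wa\in\kI$ (resp.\ $aw\in\kI$) forces the composite with the map induced by $w$ to vanish, so that one can attach a further homological degree on which $w$ acts nontrivially while $wa$ and $wb$ act as zero. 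Two parallel arrows $a,b$ with $\mathfrak{s}(a)=\mathfrak{s}(b)$, $\mathfrak{t}(a)=\mathfrak{t}(b)$ already give a Kronecker-type configuration; adjoining the ``dotted'' differential coming from $w$ (which kills both $wa$ and $wb$) produces precisely the local shape of $\bW_5$ or $\bW_6$ around the vertices $2,3,4$ and the dashed arrow $\varphi$. I would first treat, say, the case $wa,wb\in\kI$ with $\mathfrak{t}(a)=\mathfrak{s}(w)$, and obtain the opposite case $aw,bw\in\kI$ by passing to $\bA^{\op}$ (derived wildness is invariant under $\bA\mapsto\bA^{\op}$, since $\kD(\bA)^{\op}\simeq\kD(\bA^{\op})$).

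Concretely, I would build a bounded complex $P_\bp$ of projective $\tA$-modules, nonzero in four consecutive degrees, whose components are suitable direct sums of copies of $\tA_{\mathfrak{s}(a)}$, $\tA_{\mathfrak{t}(a)}$ and $\tA_{\mathfrak{t}(w)}$ (and, if needed, of $\tA_i$ for the intermediate vertices along the chosen representative path $w_{i_0}$ of $w$), with multiplicities matching the dimension vector $\fD=(2,2,2,4,4,2,2,2,1)$ for $\bW_5$ (resp.\ $(2,2,2,4,2,2,2,1)$ for $\bW_6$). The differentials are read off from the box: the blocks are $x\cdot\mathrm{id}$, $y\cdot\mathrm{id}$, identity blocks, and left-multiplication by $a$, $b$, $w$. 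One then checks: (i) $\im d_n\subseteq\sJ P_{n-1}$, i.e.\ the complex is minimal --- this holds because $a,b$ lie in $\rad\bA$ (the ideal $\kI$ is admissible) and $w$ is a sum of paths of length $\geq 1$; (ii) the relations $wa,wb\in\kI$ make $d^2=0$; and (iii) tensoring with a finite-dimensional $\kxy$-module $L$ recovers exactly the box representation of $\bW_5$ (resp.\ $\bW_6$) with the $\Mk\langle x,y\rangle$-action of $L$ installed at the ``$\varphi$-vertex''. Feeding a finite-dimensional $\kxy$-module into this complex thus yields a functor from $\kxy$-mod to $\kP_{\min}(\bA)$ which, by the representation-embedding property of $M(\bW_5)$ (resp.\ $M(\bW_6)$) established in Section~\ref{s31}, preserves indecomposability and isomorphism classes. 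That is exactly Definition~\ref{tw}(3).

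The main obstacle I anticipate is purely bookkeeping rather than conceptual: one has to make sure that the chosen projective complex is genuinely \emph{minimal} (no identity entries survive after completion) and that its indecomposable summands over $\tA$ are correctly controlled, so that the induced functor really detects indecomposability of $L$ and not merely of $L$ up to some coarser equivalence. A secondary subtlety is that $w=\sum_i\la_i w_i$ may be a genuine linear combination of several paths rather than a single path; then the ``$w$-block'' of the differential is the matrix $\sum_i\la_i\,(\text{left mult.\ by }w_i)$, and one must verify that, together with the relations $w_ia,\,w_ib$ need not individually lie in $\kI$ --- only the combination $wa,wb$ does --- the complex still has $d^2=0$ and still produces the wild box. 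This is handled by choosing the homological layout so that $w$ appears as a \emph{single} matrix entry between two fixed projective modules $\tA_{\mathfrak{s}(w)}$ and $\tA_{\mathfrak{t}(w)}$, never splitting $w$ into its summands, so that only the identity $wa=wb=0$ in $\bA$ is used. Once the complex is written down, verifying $d^2=0$, minimality, and the embedding property is routine, and the opposite-direction case follows by the $\op$-duality remark above.
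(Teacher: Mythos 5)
Your general strategy --- paint the arrows of a known wild object with the elements $a$, $b$, $w$ to obtain a minimal complex of projectives over $\bA\*\kxy$, then invoke the representation-embedding property of $M(\bB)$ --- is indeed the strategy of the paper. But there is a genuine gap in your choice of wild object and, consequently, in the one step you leave unexecuted. The data of the lemma (two parallel arrows $a,b$ and a single element $w$ composable with them, with the composites in $\kI$) matches exactly the wild \emph{hereditary} quiver $\bW_1$: $1\rightrightarrows 2\to 3$, and the paper's proof is the three-term complex
\[
d_1\tA_{\mathfrak{t}(a)}\xrightarrow{\ aM(p)+bM(q)\ } d_2\tA_{\mathfrak{s}(a)}\xrightarrow{\ wM(s)\ } d_3\tA_{\mathfrak{s}(w)}
\]
(in the case $\mathfrak{s}(a)=\mathfrak{t}(w)$), where $d^2=0$ is literally the hypothesis $aw=bw=0$ in $\bA$ and minimality is automatic. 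You instead reach for $\bW_5$ and $\bW_6$. These are not quivers but boxes with dashed arrows $\varphi,\psi$; realizing them by complexes requires, besides the solid-arrow differentials, specific nonzero homomorphisms between projectives placed in the \emph{same} homological degree (this is how the paper uses them, e.g.\ in Case 3 of the proof of Theorem B, where the extra structure $e_i\bA e_i\cong\bL_2$ supplies those homomorphisms). Nothing in the hypotheses of Lemma~\ref{wildpaths} provides such homomorphisms, so ``the differentials are read off from the box'' does not go through: there is no evident complex whose endomorphism calculus reproduces $\bW_5$ or $\bW_6$ from $a$, $b$, $w$ alone. Since you never actually write the complex down, the core of the proof is missing.

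Two smaller points. Your parenthetical suggestion to include $\tA_i$ for ``the intermediate vertices along the chosen representative path $w_{i_0}$'' contradicts your own (correct) later observation that $w$ must occur as a single matrix entry: only $wa,wb\in\kI$ is assumed, not $w_ia,w_ib\in\kI$, so $w$ may not be factored through intermediate projectives. Also, describing the differential blocks as ``$x\cdot\mathrm{id}$, $y\cdot\mathrm{id}$, identity blocks'' is dangerous: a bare identity or $x\cdot\mathrm{id}$ entry would violate $\im d_n\subseteq\sJ P_{n-1}$. In the correct formulation every entry of the differential is a radical element of $\bA$ times a matrix over $\kxy$ coming from $M$; the $x$, $y$ and identity matrices live entirely inside $M$, never as naked differential entries. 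Your reduction of the second case to the first via $\bA^{\op}$ is acceptable (the paper simply says the other case is similar).
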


\begin{proof}
We assume that $\mathfrak{s}(a)=\mathfrak{t}(w)$ (the other case is similar).
Let $M=M(\bW_1)$. Denote by $N_\bp$ the following complex of $\bA
-\Mk\left< x,y\right>$-bimodules:

\[
\xymatrix{
d_1\tA_{\mathfrak{t}(a)} \ar@/^1pc/[r]^{aM(p)} \ar@/_1pc/[r]_{bM(q)} &d_2\tA_{\mathfrak{s}(a)} \ar[r]^{wM(s)} & d_3\tA_{\mathfrak{s}(w)}
}
\]

\noindent or, equivalently,

\[
\xymatrix{
\cdots \ar[r] & 0 \ar[r] & d_1\tA_{\mathfrak{t}(a)} \ar[rr]^{aM(p)+bM(q)} & &d_2\tA_{\mathfrak{s}(a)} \ar[r]^{wM(s)} & d_3\tA_{\mathfrak{s}(w)} \ar[r] & 0 \ar[r] & \cdots
}
\]

It is not difficult to verify that the functor
$N_\bp\otimes_{\Mk\left< x,y\right>}-$, which acts from the category of
finite dimensional $\Mk\left< x,y\right>$-modules to the category
$\kP_{\min}(\bA)$, preserves indecomposability and the isomorphism
classes. Hence, $\bA $ is derived wild.
\end{proof}

\begin{lemma}\label{wildloop}
Suppose that there exist $a,b\in \kQ_1$ such that $\mathfrak{s}(a)=\mathfrak{t}(a)=\mathfrak{t}(b)$
(resp., $\mathfrak{s}(a)=\mathfrak{t}(a)=\mathfrak{s}(b)$) and $a^{2}, ab\in \kI$ (resp., $a^{2}, ba\in \kI$).
Then $\bA$ is  derived wild.
\end{lemma}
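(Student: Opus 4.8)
The plan is to reduce to one of the boxes $\bW_5$ or $\bW_6$ introduced above, in the same spirit as the proof of Lemma~\ref{wildpaths}, where one exhibits a complex of projective $\bA$--$\kxy$--bimodules whose tensor functor preserves indecomposability and isomorphism classes, and hence witnesses derived wildness. The presence of a loop $a$ with $a^2\in\kI$ together with a second arrow $b$ satisfying $ab\in\kI$ (or $ba\in\kI$) means that applying $a$ to a projective $\tA_{\mathfrak{s}(a)}$ produces a map which squares to zero and kills the image of $b$; this is exactly the kind of local configuration that should let us realise the differential pattern of $\bW_5$ or $\bW_6$ inside $\kP_{\min}(\bA)$.

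First I would treat the case $\mathfrak{s}(a)=\mathfrak{t}(a)=\mathfrak{t}(b)$ with $a^2,ab\in\kI$ (the other case being obtained by applying the argument to $\bA^{\op}$, or symmetrically). Write $i=\mathfrak{s}(a)=\mathfrak{t}(a)$ and $j=\mathfrak{s}(b)$, so $b\colon j\to i$. I would fix $\bB$ to be the box $\bW_6$ (or $\bW_5$) and $M=M(\bB)$ the associated bimodule with ranks $d_k^M$, and build a complex $N_\bp$ of $\bA$--$\kxy$--bimodules placed in a few consecutive degrees, whose components are direct sums of copies of $\tA_i$ and $\tA_j$ in multiplicities dictated by the dimension vector ${\bf d}=(2,2,2,4,2,2,2,1)$ of $\bW_6$, and whose differentials are built from $a$, $b$, the identity, and the matrices $M(p_k)$, $M(q_\ell)$, $M(r)$ so as to mimic the quiver-with-differential shape of the box. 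The relations $a^2\in\kI$ and $ab\in\kI$ are precisely what guarantees that consecutive differentials compose to zero (the $\varphi,\psi$ dotted arrows of the box correspond to multiplication by $a$, which has square zero and annihilates $b$), so $N_\bp$ is genuinely a complex, and the condition $\im d_n\subseteq \sJ N_{n-1}$ holds because $a,b\in\sJ=\rad\bA$.

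The key verification is that the functor $N_\bp\otimes_{\kxy}\_$ from finite-dimensional $\kxy$-modules to $\kP_{\min}(\bA)$ is fully faithful on objects in the sense required by Definition~\ref{tw}(3): it preserves indecomposability and reflects isomorphism. Here I would argue exactly as in \cite{bm} and in the proof of Lemma~\ref{wildpaths}: a homotopy between images $N_\bp\otimes L$ and $N_\bp\otimes L'$, or an endomorphism of $N_\bp\otimes L$, can be written in matrix form over $\bA$; using minimality and the fact that the only relevant paths among the vertices $i,j$ entering the differentials are $a$, $b$ and the trivial paths, one checks that such a morphism is forced to come from a morphism of the underlying $\bW_6$-representations (equivalently, of $\kxy$-modules), and since $\bW_6$ is wild via the bimodule $M$, indecomposability and isomorphism type are controlled by $L$. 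Thus $\bA$ is derived wild.

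The main obstacle is the bookkeeping in the previous paragraph: one must set up $N_\bp$ with enough redundancy (the copies of $\tA_i,\tA_j$ in the right multiplicities) so that no ``accidental'' homotopies or endomorphisms appear beyond those predicted by the box, and one must check that the minimality condition $\im d_n\subseteq\rad N_{n-1}$ is preserved after tensoring and that the cohomology stays bounded. These are the points where the argument of \cite{bm} has to be ``modified minorly'' to the complete (possibly infinite-dimensional) setting; conceptually nothing new happens, but the explicit complex and the matrix computation need to be written out carefully. Once that template is in place, the symmetric case $a^2,ba\in\kI$ follows by passing to $\bA^{\op}$, completing the proof.
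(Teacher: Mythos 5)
There is a genuine gap here: for a lemma of this kind the proof \emph{is} the explicit complex of $\bA$-$\kxy$-bimodules, and your proposal never produces one. You describe the intended complex only as having components ``in multiplicities dictated by the dimension vector $(2,2,2,4,2,2,2,1)$'' with differentials ``built from $a$, $b$, the identity, and the matrices $M(p_k),M(q_\ell),M(r)$''. But that dimension vector is only the witness that the quadratic form of $\bW_6$ takes the value $-1$, i.e.\ that $\bW_6$ is wild and a bimodule $M(\bW_6)$ exists; the multiplicities in $N_\bp$ must be the ranks $d_k^M$ of that bimodule, which are not this vector, and without the actual matrices of the differential one cannot check $d^2=0$, minimality, or that $N_\bp\otimes_{\kxy}\_$ preserves indecomposability and isomorphism classes. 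There is also a conceptual confusion in your choice of target: the dotted arrows $\varphi,\psi$ of the boxes $\bW_5,\bW_6$ do not ``correspond to multiplication by $a$'' as components of the differential of $N_\bp$; they encode the extra, homotopy-induced components of morphisms in the resulting matrix problem. The paper needs such a box precisely in the local case $a^n\in\kJ$ with $n>2$, where homotopies genuinely shift $a^{n-1}$-entries into $a^n$-entries. In the configuration of this lemma the relations $a^2,ab\in\kI$ (resp.\ $a^2,ba\in\kI$) kill exactly those effects, so no box is needed and introducing one is unmotivated.

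What the paper actually does is simpler and uses the plain wild hereditary quiver $\bW_3$: with $M=M(\bW_3)$ it forms the four-term ``staircase'' complex with $N_3=d_1\tA_{\mathfrak{s}(a)}\oplus d_5\tA_{\mathfrak{t}(b)}$, $N_2=d_2\tA_{\mathfrak{s}(a)}\oplus d_6\tA_{\mathfrak{t}(b)}$, $N_1=d_3\tA_{\mathfrak{s}(a)}\oplus d_7\tA_{\mathfrak{t}(b)}$, $N_0=d_4\tA_{\mathfrak{s}(a)}$, the spine maps being multiplication by $a$ twisted by $M(p_1),M(p_2),M(p_3)$ and the off-spine maps multiplication by $b$ twisted by $M(q),M(r),M(s)$. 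The hypotheses $a^2\in\kI$ and $ba\in\kI$ (resp.\ $ab\in\kI$) are exactly what make consecutive differentials compose to zero, and since every null-homotopic endomorphism of $N_\bp\otimes L$ is radical, the scalar parts of chain endomorphisms reduce to morphisms of $\bW_3$-representations. To repair your argument you should either reproduce this complex or write out explicitly the $\bW_6$-shaped one you have in mind together with the homotopy analysis; as it stands the proposal is a strategy statement rather than a proof.
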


\begin{proof}
We assume that $\mathfrak{s}(a)=\mathfrak{t}(a)=\mathfrak{s}(b)$ (the other case is similar).
Let $M=M(\bW_3)$ be as above.
Let us denote by $N_\bp$ the following complex of $\bA$-$\Mk\left< x,y\right>$-bimodules.

\[
 \xymatrix{
 d_1\tA_{\mathfrak{s}(a)} \ar[r]^{aM(p_1)} &  d_2\tA_{\mathfrak{s}(a)} \ar[r]^{aM(p_2)} & d_3\tA_{\mathfrak{s}(a)} \ar[r]^{aM(p_3)} & d_4\tA_{\mathfrak{s}(a)} \\
 d_5\tA_{\mathfrak{t}(b)} \ar[ur]^{bM(q)} &  d_6\tA_{\mathfrak{t}(b)} \ar[ur]^{bM(r)} & d_7\tA_{\mathfrak{t}(b)} \ar[ur]^{bM(s)} &
 }
\]

Here each column presents direct summands of a non-zero component $N_{n}$ (in our case $n=3,2,1,0$) and
the arrows show the non-zero components of the differential.
Again applying the functor
$N_\bp\otimes_{\Mk\left< x,y\right>}-$ we immediately obtain that $\bA $ is derived wild.
\end{proof}

\subsection{Proof of Theorem A}\label{s32}

\begin{proof}
\,\,\,$"\Rightarrow."$

 Suppose first that $\bA$ is pure noetherian. Since $\bA$ is derived
 tame, it is tame and hence nodal by \cite{d1}. Then it follows from Proposition~\ref{nodal} that
$\bA$ is isomorphic to one of the algebras $\bL_3-\bL_5$.

 Suppose now that $\bA$ has some minimal ideal $\kJ$.
 If $\kQ_1=\emptyset$ then $\bA$ is isomorphic to the algebra $\bL_1$.
Suppose that there exist $a,b\in \kQ_1$, $a\ne b$. Consider any $0\ne z\in \kJ$.
Then $\bA$ satisfies the conditions of
Lemma~\ref{wildpaths}, where $a=a, b=b$ and $w=z$, hence $\bA$ is
derived wild.

Therefore we can assume that $\kQ_1$  has only one arrow, say $a$. Then $a^{n}\in \kJ$
for some $n\in \mN, n>1$. If $n=2$ then $\bA$ is isomorphic to  $\bL_2$.
Assume that $n>2$.
Let $M=M(\bW_6)$. Denote by $N_\bp$ the following complex of $\bA$-$\Mk\left< x,y\right>$-bimodules:

\[ \hspace*{-2em}
 \xymatrix{
 &  d_2\tA \ar[rd]|{a^{n-1}M(q_1)} & &  d_5\tA \ar[rd]^{a^{n-1}M(q_3)}& & & \\
 d_1\tA \ar[rd]_{a^{n-1}M(p_2)} \ar[ur]^{a^{n}M(p_1)} & & d_4\tA \ar[rd]|{a^{n-1}M(p_4)} \ar[ur]|{a^{n}M(p_3)}&  & d_7\tA \ar[rr]^{a^{n}M(r)} && d_8\tA\\
 &  d_3\tA \ar[ur]|{a^{n}M(q_2)} & & d_6\tA \ar[ur]_{a^{n}M(q_4)}& & &
 }
\]

Again it is easy to check that the functor
$N_\bp\otimes_{\Mk\left< x,y\right>}-$  preserves indecomposability and the isomorphism
classes. We conclude that $\bA $ is derived wild.

\medskip
$"\Leftarrow."$  Since $\bL_1$ and $\bL_3$ are hereditary, it follows from \cite{h}
that $\bL_1$ is derived finite and $\bL_3$ is derived discrete but not derived finite.
Since $\bL_2$ is gentle, it follows from \cite{bm} that $\bL_2$ is
derived discrete but not derived finite.
Since $\bL_4$ and $\bL_5$ are nodal algebras, it follows from \cite{bud} that $\bL_4$ and $\bL_5$
are derived tame but not derived discrete.
\end{proof}

\subsection{Proof of Theorem B}\label{s33}

\begin{proof}
(1) $(i)\Rightarrow (iii).$

\medskip
Since $\bA$ is derived tame, then $\bA $ is tame and hence $\Mk\kQ/\rad^2(\Mk\kQ)$ is tame.
Then we conclude  that $\kQ$ is one of the quivers from Table~\ref{table_gentle}.

Let us consider all cases.

\medskip
{\sc Case 1. }  $\kQ=\kQ1$.
Then $\bA$ is isomorphic to the algebra (1) from Table~\ref{table_gentle}.

\medskip
{\sc Case 2. }  $\kQ=\kQ2$.
Then $\bA$ is isomorphic to the algebra (2) from Table~\ref{table_gentle}.

\medskip
{\sc Case 3. }  $\kQ=\kQ3$.
It follows from Theorem A and Lemma~\ref{subalg} that for  $i\in\{1,2\}$ we have $e_i\bA e_i\cong \bL_j$  for some $j\in\{1,2,3\}$.
If $e_i\bA e_i\cong \bL_3$ for some $i$, then $\bA$ is isomorphic to the algebra (3) from Table~\ref{table_gentle}.
If $e_i\bA e_i\cong \bL_1$ for  $i=1,2$, then $\bA$ is isomorphic to the algebra (5) from Table~\ref{table_gentle}.
If $e_i\bA e_i\cong \bL_1$ and $e_j\bA e_j\cong \bL_2$ for $i,j\in\{1,2\}$, then $\bA$
is isomorphic to the algebra (4) from Table~\ref{table_gentle}.
Suppose finally that  $e_i\bA e_i\cong \bL_2$ for  $i=1,2$.
Then $baba,abab\in\kI$, $ab\not\in\kI$ and $ba\not\in\kI$.
Therefore,   $aba\in \kI$ or $bab\in \kI$ or
$\kI=\left<abab,baba\right>$. Let us consider all cases.

\medskip
\noindent(a) $aba\in \kI$, $bab\not\in \kI$.

Let $M=M(\bW_5)$. Let us denote by $N_\bp$ the following complex of $\bA$-$\Mk\left< x,y\right>$-bimodules:

\[
 \xymatrix{
 &  d_2\tA_2 \ar[rd]^{aM(q_1)} & & & d_6\tA_2 \ar[rd]^{aM(q_3)}& & & \\
 d_1\tA_2 \ar[rd]_{aM(p_2)} \ar[ur]^{abM(p_1)} & & d_4\tA_1 \ar[r]^{babM(r_1)}& d_5\tA_2\ar[rd]_{aM(p_4)} \ar[ur]^{abM(p_3)}&  & d_8\tA_1 \ar[r]^{baM(r_2)} & d_9\tA_1\\
 &  d_3\tA_1 \ar[ur]_{baM(q_2)} & & & d_7\tA_1 \ar[ur]_{baM(q_4)}& & &
 }
\]

\medskip
Since the functor
$N_\bp\otimes_{\Mk\left< x,y\right>}\_$  preserves indecomposability and the isomorphism
classes, we conclude that $\bA $ is derived wild.

\medskip
\noindent(b) $bab\in \kI$, $aba\not\in \kI$. This case is similar to the case (a).

\medskip
\noindent(c) $\kI=\left<aba,bab\right>$.

Let $M=M(\bW_5)$. Let us denote by $N_\bp$ the following complex of $\bA$-$\Mk\left< x,y\right>$-bimodules:

\[
 \xymatrix{
 &  d_2\tA_1 \ar[rd]^{bM(q_1)} & & & d_6\tA_2 \ar[rd]^{aM(q_3)}& & & \\
 d_1\tA_1 \ar[rd]_{bM(p_2)} \ar[ur]^{baM(p_1)} & & d_4\tA_2 \ar[r]^{abM(r_1)}& d_5\tA_2\ar[rd]_{aM(p_4)} \ar[ur]^{abM(p_3)}&  & d_8\tA_1 \ar[r]^{baM(r_2)} & d_9\tA_1\\
 &  d_3\tA_2 \ar[ur]_{abM(q_2)} & & & d_7\tA_1 \ar[ur]_{baM(q_4)}& & &
 }
\]

\medskip
Applying the functor
$N_\bp\otimes_{\Mk\left< x,y\right>}\_$  we conclude that $\bA $ is derived wild.

\medskip
\noindent(d) $\kI=\left<abab,baba\right>$.

Let $M=M(\bW_5)$. Let us denote by $N_\bp$ the following complex of $\bA$-$\Mk\left< x,y\right>$-bimodules:

\[
 \xymatrix{
 &  d_2\tA_1 \ar[rd]^{baM(q_1)} & & & d_6\tA_1 \ar[rd]^{baM(q_3)}& & & \\
 d_1\tA_2 \ar[rd]_{abM(p_2)} \ar[ur]^{abaM(p_1)} & & d_4\tA_1 \ar[r]^{babM(r_1)}& d_5\tA_2\ar[rd]_{abM(p_4)} \ar[ur]^{abaM(p_3)}&  & d_8\tA_1 \ar[r]^{baM(r_2)} & d_9\tA_1\\
 &  d_3\tA_2 \ar[ur]_{abaM(q_2)} & & & d_7\tA_2 \ar[ur]_{abaM(q_4)}& & &
 }
\]

\medskip
Again applying the functor
$N_\bp\otimes_{\Mk\left< x,y\right>}\_$   we conclude that $\bA $ is derived wild.

\medskip
{\sc Case 4. }  $\kQ=\kQ4$.
Up to isomorphism we may assume that $ca+cf_1\in \kI$ and  $ac+f_2c\in \kI$ or $bc+f_3\in \kI$ for some
$f_i\in \rad^{2} \bA$; otherwise $\bA/{\rad^{3}\bA}$ is wild by \cite{bh} and hence $\bA$
is wild. Replacing $a+f_1$ with $a$ we can assume in both cases that $ca\in \kI$. Let us consider all cases.

\medskip
\noindent (a) $ca, ac+f_2c\in \kI$.
Then it follows from Lemma~\ref{wildpaths} that $cb\not\in\kI$.
Since $f_2=ag_1(cb)+bg_2(cb)$ for some polynomials $g_i$, we have
$ac=h(bc)$ for some polynomial $h$.
Then it follows from Lemma~\ref{subalg} that $e_1\bA e_1\cong \bL_2$, hence $cbcb\in \kI$.
If $bcbc\not\in\kI$ or $acbc\not\in\kI$, $\bA$ is derived wild by Lemma~\ref{wildpaths}.
Therefore $f_2\in\kI$ and hence $ac\in \kI$.
Hence
$\bA$ is isomorphic to the algebra (7) from Table~\ref{table_gentle}.

\medskip
\noindent (b) $ca, bc+f_3c\in \kI$.
Replacing $b+f_3$ with $b$ we can assume that $bc\in \kI$.
Then it follows from Lemma~\ref{wildpaths} that $ac\not\in\kI$, $cb\not\in\kI$ and $acb\not\in\kI$, hence
$\bA$ is isomorphic to the algebra (6) from Table~\ref{table_gentle}.

\medskip
{\sc Case 5. }  $\kQ=\kQ5$.
Suppose first that $\bA$ is pure noetherian. Since $\bA$ is derived
 tame, it is tame and hence nodal by \cite{d1}. Then it follows from Proposition \ref{nodal} that
$\bA$ is isomorphic to one of the algebras $(8)$ or $(9)$ from Table~G.

Suppose finally that $\bA$ has some minimal ideal $\kJ$.
Given $0\ne z\in \kJ$. We suppose that $s(z)=e(a)$ (the
case $s(z)=s(a)$ is similar). Then $\bA$ satisfies the conditions
of Lemma~\ref{wildpaths}, where $u=a, v=b$ and $w=z$, hence $\bA $ is
derived wild.

\medskip
{\sc Case 6. }  $\kQ=\kQ6$.
Suppose first that $e_1\bA e_1$ is finite-dimensional.
Then it follows from Theorem A
and Lemma~\ref{subalg} that $a^2\in I$. From Lemma~\ref{wildloop} we
conclude that $ba\not\in \kI$. Hence $\bA$ is isomorphic to the algebra (10) from Table~\ref{table_gentle}.

Suppose finally that $e_1\bA e_1$ is infinite-dimensional.
Then $e_1\bA e_1\cong \bL_2$.
If $ba\not\in\kI$ then $\bA$ is wild, since the finite-dimensional
algebra $\bA/\left<a^{7}, ba^{2}\right>$ is wild by \cite{hm} and hence
$\bA$ is derived wild.
Therefore $ba\in\kI$ and $\bA$ is isomorphic to the algebra (11) from Table~\ref{table_gentle}.

\medskip
{\sc Case 7. }  $\kQ=\kQ7$.
This case is dual to the previous case.
Then we obtain in this case that $\bA$ is isomorphic to one of the algebras $(12)$ or $(13)$ from Table~\ref{table_gentle}.

\medskip
{\sc Case 8. }  $\kQ=\kQ8$. Then it follows from Theorem A and Lemma~\ref{subalg}
that $e_1\bA e_1$ is isomorphic to one of the algebras $\bL_2-\bL_5$.
Then one of the following situations occur:

\medskip
\noindent(a)  $e_1\bA e_1$ is isomorphic to one of the algebras $\bL_4-\bL_5.$
Then $cb\not\in\Mk[[a]]$ (in particular, $cb\not\in\kI$).

Suppose first that $\bA$ is pure noetherian. Since $\bA$ is derived
 tame, it is tame and hence nodal by \cite{d1}. Then it follows from Proposition~\ref{nodal} that
$\bA$ is isomorphic to one of the algebras $(14)-(15)$ from Table~\ref{table_gentle}.

Suppose finally that $\bA$ has some minimal ideal $\kJ$.
Fix $0\ne z\in \kJ$.
Then one of the following situations occur:

\medskip
\noindent(aa)  $\mathfrak{t}(z)=1.$
Let $M=M(\bW_1)$ (see Section~\ref{s31}). Let us denote by $N_\bp$ the following complex of
$\bA$-$\Mk\left< x,y\right>$-bimodules.

\[
\xymatrix{
d_1\tA_1 \ar@/^1pc/[r]^{aM(p)} \ar@/_1pc/[r]_{cbM(q)} &d_2\tA_1 \ar[r]^{zM(s)} & d_3\tA_{\mathfrak{s}(z)}
}
\]

\medskip
Applying the functor
$N_\bp\otimes_{\Mk\left< x,y\right>}\_$ we conclude that $\bA $ is derived wild.

\medskip
\noindent(ab)  $\mathfrak{s}(z)=1.$ This case is dual to the case (aa).

\medskip
\noindent(ac)  $\mathfrak{s}(z)=\mathfrak{t}(z)=2.$
Since algebras $\bL_3-\bL_5$ are pure noetherian, we obtain from Theorem A and Lemma~\ref{subalg}
that algebra $e_2\bA e_2$ is isomorphic to  the algebra $\bL_2$.

Then we can assume that $z=bc$ or $bc\in \kI$ and $z=bfc$ for some $f\in\rad \bA$.

Suppose first that $z=bc$.

Let $M=M(\bW_5)$. Let us denote by $N_\bp$ the following complex of $\bA$-$\Mk\left< x,y\right>$-bimodules:

\[
 \xymatrix{
 &  d_2\tA_1 \ar[rd]^{cM(q_1)} & & & d_6\tA_2 \ar[rd]^{bM(q_3)}& & & \\
 d_1\tA_1 \ar[rd]_{cM(p_2)} \ar[ur]^{cbM(p_1)} & & d_4\tA_2 \ar[r]^{bcM(r_1)}& d_5\tA_2\ar[rd]_{bM(p_4)} \ar[ur]^{bcM(p_3)}&  & d_8\tA_1 \ar[r]^{cbM(r_2)} & d_9\tA_1\\
 &  d_3\tA_2 \ar[ur]_{bcM(q_2)} & & & d_7\tA_1 \ar[ur]_{cbM(q_4)}& & &
 }
\]

\medskip
It shows that $\bA $ is derived wild.

Suppose finally that $bc\in\kI$ and $z=bfc$ for some $f\in\rad \bA$.

Let $M=M(\bW_5)$. Let us denote by $N_\bp$ the following complex of $\bA$-$\Mk\left< x,y\right>$-bimodules:

\[
 \xymatrix{
 &  d_2\tA_1 \ar[rd]^{fcM(q_1)} & & & d_6\tA_2 \ar[rd]^{bM(q_3)}& & & \\
 d_1\tA_1 \ar[rd]_{cM(p_2)} \ar[ur]^{cbM(p_1)} & & d_4\tA_2 \ar[r]^{bfcM(r_1)}& d_5\tA_2\ar[rd]_{bfM(p_4)} \ar[ur]^{bfcM(p_3)}&  & d_8\tA_1 \ar[r]^{cM(r_2)} & d_9\tA_2\\
 &  d_3\tA_2 \ar[ur]_{bfcM(q_2)} & & & d_7\tA_1 \ar[ur]_{cbM(q_4)}& & &
 }
\]

\medskip
We immediately see that $\bA $ is derived wild.

\medskip
\noindent(b) $e_1\bA e_1$ is isomorphic to $\bL_2$.
Then $a^{2}, cb\in\kI$ and hence $ba\not\in\kI$ and  $ac\not\in\kI$ by
Lemma~\ref{wildloop}. Suppose first that $bc\in\kI$ and $bac\not\in\kI$.
Then $\bA$ is isomorphic to the algebra (16) from Table~\ref{table_gentle}.

Suppose next that $bac\in\kI$.
Let $M=M(\bW_6)$. Denote by $N_\bp$ the following complex of $\bA$-$\Mk\left< x,y\right>$-bimodules:

\[
 \xymatrix{
 &  d_2\tA_1 \ar[rd]|{cM(q_1)} &  & d_5\tA_1 \ar[rd]^{cM(q_3)}& & & \\
 d_1\tA_2 \ar[rd]_{bM(p_2)} \ar[ur]^{baM(p_1)} & & d_4\tA_2\ar[rd]|{bM(p_4)} \ar[ur]|{baM(p_3)}&  & d_7\tA_2 \ar[r]^{bM(r)} & d_8\tA_1\\
 &  d_3\tA_1 \ar[ur]|{acM(q_2)} &  & d_6\tA_1 \ar[ur]_{acM(q_4)}& & &
 }
\]

\medskip
which shows that $\bA $ is derived wild.

Suppose, finally that $bc\not\in\kI$ and $bac\not\in\kI$.
If $bc=\lambda bac$  (resp., $bac=\lambda bc$) for some $\lambda\in \Mk, \lambda\neq 0$, we can
reduce this case to previous one replacing $b(1-a)$ (resp., $b(a-1)$) with $b$.
Therefore it remains to consider the case when $bc$ and $bac$ are linearly independent. But in this case
$e_2\bA e_2\cong \bL_5/(xy-yx)$, hence $\bA$ is wild by Theorem A and Lemma~\ref{subalg}.

\medskip
\noindent(c) $e_1\bA e_1$ is isomorphic to $\bL_3$.

\medskip
\noindent(ca) Suppose first that $cb\in \kI$.
If $ba\not\in\kI$ then $\bA$ is wild, since the finite-dimensional
algebra $\bA/\left<a^{7}, ba^{2},c\right>$ is wild by \cite{hm} and hence
$\bA$ is derived wild. The case $ac\not\in\kI$ is similar. Therefore we obtain that $ba, ac\in\kI$.
If $bc\not\in\kI$, then $\bA$ is isomorphic to the algebra (17) from Table~\ref{table_gentle}.
Therefore it remains to consider the case when $bc\in\kI$.

Let $M=M(\bW_4)$. Let us denote by $N_\bp$ the following complex of $\bA$-$\Mk\left< x,y\right>$-bimodules:

\[
 \xymatrix{
 d_1\tA_1 \ar[r]^{cM(p_1)} &  d_2\tA_2 \ar[r]^{bM(p_2)} &  d_3\tA_1 \ar[r]^{cM(p_3)} \ar[dr]^{aM(q)}& d_4\tA_2 \ar[r]^{bM(p_4)} & d_5\tA_1 \ar[r]^{cM(p_5)} \ar[dr]^{aM(r)}&  d_6\tA_2\\
 && &d_7\tA_1  & & d_8\tA_1
 }
\]

\medskip
 We conclude that $\bA $ is derived wild.

\medskip
\noindent(cb)  Suppose finally that $cb\not\in\kI$.
Then $cb=f(a)$ for some polynomial $f$ such that $f(0)=0$.
Then for any $w\in \rad \bA$ there exist $u,v\in  \bA$ such that $vwu=g(a)$ for some
polynomial $g$ and hence $\bA$ is pure noetherian. Since $\bA$ is derived tame, it is tame and
hence nodal by \cite{d1}. But it follows from Proposition~\ref{nodal} that if $\bA$ is nodal algebra with
quiver $\kQ=\kQ8$ than $e_1\bA e_1$ is isomorphic to one of the algebras $\bL_4-\bL_5$, hence
this case is impossible.

\medskip
{\sc Case 9. }  $\kQ=\kQ9$.
Then one of the following situations occurs:

\medskip
\noindent(a) $e_i\bA e_i$ is finite-dimensional for $i=1,2.$
Then it follows from Theorem A and Lemma~\ref{subalg} that $a^2,b^2\in \kI$.
Then we conclude from Lemma~\ref{wildloop} that $ca\not\in \kI$ and $bc\not\in \kI$ and
hence $\bA$ is isomorphic to the algebra (18) from Table~\ref{table_gentle}.

\medskip
\noindent(b) $e_i\bA e_i$ is infinite-dimensional for $i=1,2.$
Then it follows from Theorem A and Lemma~\ref{subalg} that $e_i\bA e_i\cong \bL_2$ for $i=1,2.$
If $ca\not\in \kI$ or $bc\not\in \kI$ then it follows from \cite{hm} that $\bA/\left<a^{5},b^{5}\right>$ is wild,
therefore $\bA$ is wild and hence $\bA$ is derived wild.
Hence we obtain that $ca,bc\in \kI$ and $\bA$ is isomorphic to the algebra (21) from Table~\ref{table_gentle}.

\medskip
\noindent(c) $e_1\bA e_1$ is finite-dimensional and $e_2\bA e_2$ is infinite-dimensional.
Then it follows from Theorem A and Lemma~\ref{subalg} that $a^2\in \kI$ and $e_2\bA e_2\cong \bL_2$.
Then we conclude from Lemma~\ref{wildloop} that $ca\not\in \kI$.
If $bc\not\in \kI$ or $ca-bc\not\in \kI$ then it follows from \cite{hm} that $\bA/\left<b^{5}\right>$ is wild,
therefore $\bA$ is wild and hence $\bA$ is derived wild. Hence we obtain that either $bc\in \kI$ and $\bA$
is isomorphic to the algebra (19) from Table~\ref{table_gentle} or
$ca-bc\in \kI$ and $\bA$ is isomorphic to the algebra $\bD_1$.

\medskip
\noindent(d) $e_1\bA e_1$ is infinite-dimensional and $e_2\bA e_2$ is finite-dimensional.
This case is dual to the previous case.
Then we obtain in this case that $\bA$ is isomorphic to the algebra (20) from Table~\ref{table_gentle} or
 to the algebra $\bD_2$.

\medskip
{\sc Case 10. }  $\kQ=\kQ10$.
Suppose first that $\bA$ is pure noetherian. Since $\bA$ is derived
 tame, it is tame and hence nodal by \cite{d1}. Then it follows from Proposition~\ref{nodal} that
$\bA$ is isomorphic to one of the algebras $(22)-(24)$ from Table~\ref{table_gentle} or to the algebra $(9)$ from Table~\ref{table_nodal}.

Suppose finally that $\bA$ has some minimal ideal $\kJ$
and consider $0\ne z\in \kJ$.
Assume that $\mathfrak{t}(z)=\mathfrak{s}(b)$ (the case $\mathfrak{t}(z)=\mathfrak{s}(a)$ is similar).
Let $M=M(\bW_2)$ (see Section~\ref{s31}) and denote by $N_\bp$ the following complex of
$\bA$-$\Mk\left< x,y\right>$-bimodules.

\[
\xymatrix{
d_1\tA_1 \ar[r]^{aM(p)} \ar[dr]_(.2){dM(t)} & d_2\tA_1 & \\
d_3\tA_2 \ar[r]^{bM(q)} \ar[ur]_(.8){cM(s)} & d_4\tA_2 \ar[r]^{zM(r)}  & d_5\tA_{\mathfrak{s}(z)}
}
\]

\medskip
Again we immediately
 conclude that $\bA $ is derived wild.

\medskip
\noindent$(iii) \Rightarrow (ii).$
This follows from Lemma~\ref{nodal} and Lemma~\ref{gentle}.

\medskip
\noindent$(ii) \Rightarrow (i).$
It follows from \cite{bud} that nodal algebras are derived tame.
The derived tameness of gentle algebras follows from Theorem~\ref{thm-gentle-tame} while
the derived tameness of algebras $\bD_1$ and $\bD_2$ follows from Lemma~\ref{def}.

\medskip

Statements (2) and (3) follow from \cite{bud} and \cite{bm}.

\end{proof}

\end{document}